%% file: manuscript.tex
\documentclass{article}

\usepackage{graphicx}
\usepackage{amssymb}
\usepackage{amsmath}
\usepackage{enumitem}
\usepackage{hyperref}

\newtheorem{definition}{Definition}

\newtheorem{theorem}{Theorem}
\newtheorem{lemma}{Lemma}

\newcommand{\eproof}{{\hfill $\square$ \bigskip}}
\newcommand{\beginproof}{{\it Proof.\ }}

\author{
	Franti\v{s}ek Kardo\v{s}
	\thanks{LaBRI, CNRS, University of Bordeaux, Talence, F-33405, France; \texttt{fkardos@labri.fr}.}
\and 
	Ma\'{u}\v{s} Matok
	\thanks{Department of Computer Science, Faculty of Mathematics, Physics and Informatics, Comenius University, Mlynsk\'{a} Dolina, 842 48 Bratislava, Slovakia; \texttt{matus.matok@fmph.uniba.sk}.}
}
\title{Total coloring of (sub)cubic Halin graphs}
\begin{document}
\maketitle
\begin{abstract}
Total coloring of a graph is a coloring of its vertices and edges such that adjacent or incident elements receive distinct colors.
Total coloring conjecture (stipulating that the total chromatic number of a graph $G$ is at most $\Delta(G)+2$) is known to be true for subcubic graphs -- five colors are always enough. However, deciding whether a total coloring with only four colors exists remains a difficult problem, even in the class of bipartite cubic graphs. We solve the problem completely for cubic and subcubic Halin graphs, proving that there are only finitely many such graphs requiring five colors.
\end{abstract}
\section{Introduction}
\label{sec:introduction}

\input{article/1-introduction}

\section{Results for cubic Halin graphs}
\label{sec:proof}
\input{article/2-cubic}

\section{Results for subcubic Halin graphs}
\label{sec:subcubic}
\input{article/3-subcubic}

\section{Concluding remarks}
\label{sec:conclude}

\input{article/4-conclusion}


\bibliographystyle{habbrv}
\bibliography{sources} 

\end{document}

%% file: article/1-introduction.tex
Total coloring was introduced very soon after the edge coloring as a natural generalization of the latter. A \emph{total coloring} is an assignment of colors to vertices and edges of a given graph such that adjacent or incident elements receive distinct colors; the smallest number of colors in a total coloring of a given graph $G$ is called the \emph{total chromatic number} of $G$ and is denoted by $\chi''(G)$.

Vizing \cite{vizing1964estimate} (and independently Behzad \cite{behzad1965graphs,behzad1969total}) conjectured that every simple graph is totally $(\Delta(G)+2)$-colorable, as an analog to his upper bound of the form $\Delta(G)+1$ for the chromatic index of any (simple) graph \cite{vizing1964estimate} (see also \cite{berge1991short}). This claim has nowadays been known as the Total Coloring Conjecture (TTC). 

The TTC was verified for $\Delta = 3$ in 1971 by Rosenfeld \cite{rosenfeld1971total} and independently by Vijayaditya \cite{vijayaditya1971total} (see \cite{FENG2013-TC-subcubic} a new short proof). For $\Delta = 4$ and $\Delta = 5$ it was proven by Kostochka \cite{KOSTOCHKA1977totalDelta4, KOSTOCHKA1996totalDelta5}. In the general case, the best upper bound known so far is of the form $\Delta + 10^{26}$ as shown by Molloy and Reed \cite{Molloy1998totalGeneralBound}, see also \cite{Dalal2025total}. See the survey \cite{Geetha02092023} for more general results on total coloring.

For $\Delta = 3$, i.e., the class of subcubic graphs, there are only two possible values of the total chromatic number, namely 4 and 5. Subcubic graphs that attain the lower bound of 4 are said to be of \emph{Type 1} while the rest are said to be of \emph{Type 2}. Determining $\chi''$ is an NP-complete problem in general \cite{SanchezArroyo1989NP-total-general}, and remains so even for cubic bipartite graphs \cite{McDiarmid1994NP-total-bipartite}. 
On the other hand, for several subclasses of (sub)cubic graphs it is either known that they are totally 4-colorable (like truncated cubic graphs -- those where every vertex is in exactly one triangle \cite{anthonymuthu2022total}), or deciding whether this is true is polynomial (like graphs of bounded treewidth \cite{Isobe1999PolynomialTotal-k-trees}). Almost all generalized Petersen graphs are of Type 1 \cite{Dantas20161471}: for each integer $k\ge 2$, there exists an integer $N(k)$ such that, for any $n\ge N(k)$ the generalized Petersen graph $G(n,k)$ has total chromatic number 4.

Adjacent-vertex-distinguishing (AVD) coloring, introduced by Zhang et al. \cite{Zhang2005AVD-intro} and denoted by $\chi'_{avd}$, coincides with total coloring in cubic graphs when using four colors. Indeed, in an AVD-coloring, the necessity of distinct sets of colors on incident edges of two adjacent vertices implies that each set lacks one color from the color set, which corresponds to the color of that vertex in a total coloring. Similarly as for total coloring, Balister et al. \cite{LEHEL2007-AVD-subcubic} showed that $\chi'_{avd}(G) \leq 5$ for graphs $G$ with $\Delta = 3$.

A graph is \emph{Halin} if it can be obtained from a plane embedding of a tree with at least three leaves by adding a cycle passing through all the leaves with no crossing edges. Usually, the tree is required not to contain vertices of degree two. 
Every Halin graph is connected and planar. 
In addition to $K_4$, only three totally non-4-colorable cubic Halin graphs have been known, see Figure \ref{fig:halin}. The first two of them were identified by Huang and Chen \cite{Huang2023-halin}, who proved that these graphs are the only Type 2 graphs within the subclass of cubic Halin graphs whose tree is a caterpillar (it becomes a path when leaves are ignored). Lužar \cite{borut-personal2024} found a third one using computer assistance.

\begin{figure}
    \centering
    \includegraphics{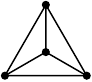}
    $\qquad$
    \includegraphics{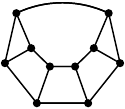}
    $\qquad$
    \includegraphics{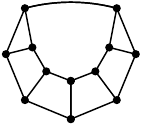}
    $\qquad$
    \includegraphics{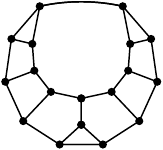}
    \caption{The only four known cubic Halin graphs with $\chi'' = 5$, referred to as $K_4$, $H_4$, $H_5$, and $H_8$ (from left to right).}
    \label{fig:halin}
\end{figure}

In this article we fully characterize Halin graphs of Type 2. 
More precisely, using computer-assisted techniques, we prove that the four graphs mentioned above are the only four cubic Halin graphs of Type 2. Additionally, we extend the characterization to subcubic Halin graphs with regards to total and AVD coloring. 

%% file: article/2-cubic.tex
\begin{theorem}\label{thm:total-cubic}
    Let $H$ be a cubic Halin graph other than $K_4, H_4, H_5$ and $H_8$. Then 
    \[
    \chi''(H) = 4.
    \]
\end{theorem}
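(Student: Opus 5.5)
We argue by induction on the number of vertices, via a finite family of reducible configurations whose colorings are checked by computer. A cubic Halin graph $H=T\cup C$ has a tree $T$ whose internal vertices all have degree $3$, and the outer cycle $C$ runs through the leaves of $T$. Root $T$ at an internal vertex and take a deepest internal vertex $v$. Its children are leaves, or in general the subtree hanging below $v$ is small. So $H$ contains a \emph{fan}: a vertex $v$ of $T$ together with consecutive leaves $u_1,\dots,u_k$ of $C$ adjacent to it, hanging off the rest of the graph through few edges.

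Standard Halin reductions act on such fans. Contracting a fan $\{v,u_1,u_2\}$ (a triangle of $H$) to a single leaf yields a smaller cubic Halin graph $H'$. More generally, replacing a subtree with $\ell$ leaves by one with fewer leaves keeps the same boundary: two cycle edges and one tree edge. So $H$ decomposes along a $3$-edge-cut into a \emph{gadget} $G$ and the rest $H'$.

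\textbf{Step 1: boundary types of gadgets.} For a gadget $G$ attached via three dangling half-edges $e_1,e_2,e_3$, we enumerate all total $4$-colorings of $G$ and record which boundary patterns they realize. A pattern consists of the colors of $e_1,e_2,e_3$ together with the colors of their outer endpoints that must avoid conflicts. Modulo permuting the four colors there are only finitely many such patterns.

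\textbf{Step 2: reducibility.} We call a gadget $G$ \emph{reducible} if there is a smaller gadget $G'$ with the same boundary whose realizable pattern set is contained in that of $G$. Then any total $4$-coloring of $H'\cup G'$ restricts on the outside to a coloring that extends into $G$. We check by computer that a finite list of gadgets is reducible, each against the triangle or against a small fan. We also exploit Kempe-type freedom: swapping two colors on the outside part changes the pattern, so it suffices for $G$ to realize one pattern in each equivalence class reachable from $G'$'s patterns.

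\textbf{Step 3: unavoidability.} We show that every cubic Halin graph not in a bounded-size family contains one of the reducible gadgets. Structurally, a deepest internal vertex together with its parent forms a subtree with $3$ or $4$ leaves. Looking two or three levels up from the deepest vertex gives only finitely many local shapes of the tree near the leaves. Each such shape must either be verified reducible or force $H$ to be small.

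\textbf{Step 4: base cases and the main obstacle.} The reduction must not land in one of $K_4,H_4,H_5,H_8$. Otherwise induction supplies no coloring, and a separate argument is needed whenever $H'$ is exceptional. Since the exceptional graphs are finite, this is handled by computer: enumerate all cubic Halin graphs up to some size bound $N$ (for example, about $20$ vertices) and check $\chi''$ directly. Choose $N$ so that every graph beyond it has a reduction whose result has more than $|V(H_8)|$ vertices, or has several disjoint reductions, one of which avoids the exceptions.

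The main obstacle is Step 2/3. Triangle contraction alone is \emph{not} reducible in general: $H_4$, $H_5$ and $H_8$ arise from colorable graphs by such expansions. So the set of configurations must be rich enough, and finding a provably unavoidable reducible set may require larger gadgets, with boundaries of up to $4$ or $5$ edges. The first attempt is the family of all subtrees of depth at most $3$, with the computer checking pattern containment. If this fails, we strengthen the induction to a stronger statement that prescribes which boundary patterns are achievable on a marked fan.
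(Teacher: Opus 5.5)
There is a genuine gap. Your reducibility criterion is sound: if $G'$ is smaller and every boundary pattern of $G'$ is also realized by $G$, then colorings lift back from $H'$ to $H$. But the proposal is a search plan, and the content of the theorem lies in Steps 2 and 3, neither of which is carried out.

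\begin{itemize}
\item You exhibit no configuration that is reducible in your sense. You note yourself that the natural candidate, triangle contraction, is not.
\item You give no argument that some finite family of depth-$\le 3$ subtrees is reducible, so the unavoidability step has nothing to apply to.
\item The fallbacks (gadgets with $4$ or $5$ boundary edges, an unspecified strengthened induction on marked fans) are not arguments either.
\item The base-case bound $N\approx 20$ is unsupported, since it depends on the size of reducible configurations you have not found.
\item The ``Kempe-type freedom'' buys nothing as stated. Transposing two colors on the whole outside only permutes the boundary pattern, and pattern sets are already invariant under color permutations.
\end{itemize}

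The missing idea is that no local reducibility or unavoidability argument is needed: the boundary behavior of a Halin tripole ranges over a fixed finite set and composes algebraically. The paper removes a peripheral vertex $v$ to get the tripole $T^H_v$. Every non-trivial tripole is a composition $T=T_1+T_2$ of the tripoles hanging from the two children of its root. For palettes (sets of extendable colorings of the extended boundary $(r,r^*,x,x^*,y,y^*)$) the paper proves $P(T)=P(T_1)\oplus P(T_2)$. Since palettes are subsets of a fixed finite set, the family $\mathcal{P}^+$ of realizable palettes is the closure of the trivial palette under $\oplus$. A computer evaluates it completely (1214 palettes, maximum rank $15$) without enumerating any graphs.

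Now $H$ is totally $4$-colorable if and only if $P(T^H_v)$ contains a completable sextuple. Only $22$ realizable palettes are incompletable. Each is checked to be uniquely realizable: it decomposes uniquely down to the trivial palette, except $P_A,P_B,P_C$, which have exactly two decompositions, each into uniquely realizable parts. This forces $T^H_v$ to be a bad tripole, hence $H\in\{K_4,H_4,H_5,H_8\}$.

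This finiteness is also what would make your own framework go through. Any tripole of rank above $15$ has the same palette as a smaller one, so reducible configurations exist and, by descending the tree, are unavoidable. However, ruling out that a large graph reduces to one of the four exceptional graphs again comes down to knowing all realizations of the relevant palettes. That is exactly what the paper's unique-realizability check provides, and a small base-case enumeration cannot replace it.
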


Before we prove our main result, let us introduce some auxiliary notions and notations.

\subsection{Halin tripoles and their (pre)colorings}

Let $(X,Y)$ be a partition of the vertex set of a graph $G$. Let $E(X,Y)$ denote the set of edges with one end-vertex in both $X$ and $Y$. Then $G[X]\cup E(X,Y)$ is a \emph{multipole} -- a graph in which some edges are only incident to one vertex (sometimes also called semi-edges or partial edges).
In this article, we will only consider planar cubic or subcubic \emph{tripoles} -- multipoles with three semi-edges -- with such an embedding that all the semi-edges are incident with the outer face.

Let $H$ be a Halin graph where $T$ is its internal tree. The vertices of $H$ corresponding to the leaves of $T$ are called \emph{peripheral vertices}. Vertices that are not peripheral are \emph{spanning vertices}. Edges joining 2 peripheral vertices are \emph{peripheral edges}. Edges that are not peripheral are \emph{spanning edges}. Observe that all the peripheral vertices are of degree three. 

Let $H$ be a Halin graph where $T$ is its internal free. Every spanning edge separates $T$ into two subtrees, let $X$ and $Y$ be their vertex sets. Then $E_H(X,Y)$ consists of three edges, one spanning and two peripheral. Any tripole of the form $H[X]\cup E(X,Y)$ where $(X,Y)$ is the partition of $V(G)$ defined by a spanning edge is called a \emph{Halin} tripole. The semi-edge corresponding to the spanning edge of $v$ will be called the \emph{root} semi-edge and the vertex incident with it in the tripole will be called \emph{root} vertex. In particular, we will denote $T^H_v$ the tripole $H[X]\cup E(X,Y)$ with $Y = \{v\}$ and $X = V(H)\setminus X$, obtained from $H$ by removing the vertex $v$.  

A tripole with $|X|=1$ is \emph{trivial}. It is the only tripole where the root vertex is peripheral.
The \emph{rank} $r(T)$ of a Halin tripole $T$ is the number of spanning vertices of $T$. Clearly, $r(T) = 0$ if and only if $T$ is trivial.

Let $T$ be a Halin tripole, let $r$ be its root edge, let $x$ and $y$ be its other two semi-edges, and let $r^*$ ($x^*$ and $y^*$) be the vertex $r$ ($x$ and $y$, respectively) is incident with. The sextuple $(r,r^*,x,x^*,y,y^*)$ is called the \emph{extended boundary} of $T$ and denoted by $\partial T$. Note that $r^*=x^*=y^*$ for a trivial tripole.

We need to redefine the notion of isomorphism for Halin tripoles. Let $T$ and $T'$ be two Halin tripoles with extended boundaries $\partial T$ and $\partial T'$. A graph isomorphism $\varphi : V(T) \to V(T')$ is a Halin tripole isomorphism if it is an orientation-preserving isomorphism from $T$ to $T'$ (as plane maps), such that $\varphi(\partial T) = \partial T'$. For instance, the tripoles $T_1^{H_5}$ and $T_2^{H_5}$ depicted in Figure \ref{fig:ambiguous-tripoles} are not considered isomorphic, despite the existence of an orientation-preserving map between them -- it does not respect the order of the semi-edges.

Observe that every total 4-coloring $\varphi : V(T) \cup E(T) \to \{0,1,2,3\}$ of a Halin tripole $T$ defines a 4-coloring of its extended boundary: 
It is the sextuple 
$$
\varphi|_{\partial T} = (\varphi(r),\varphi(r^*),\varphi(x),\varphi(x^*),\varphi(y),\varphi(y^*))
$$ 
of colors satisfying $\varphi(r)\ne\varphi(r^*)$, $\varphi(x)\ne\varphi(x^*)$, and $\varphi(y)\ne\varphi(y^*)$. Let 
$$
{S} = \{(a,b,c,d,e,f) \in \{0,1,2,3\}^6 \mid a\ne b, c\ne d, e\ne f\}
$$
denote the set of all potential precolorings of the extended boundary of a (Halin) tripole. For a given Halin tripole $T$, a precoloring $s\in {S}$ is \emph{extendable} if there exists a total 4-coloring $\varphi$ of $T$ such that $\varphi |_{\partial T} = s$. The set of all the extendable precolorings of $\partial T$ is called the \emph{palette} of $T$ and is denoted by $P(T)$. More generally, any subset of ${S}$ is a \emph{palette}. We say that a palette $P$ is \emph{realizable} if there exists a Halin tripole $T$ such that $P(T) = P$. 

Let $s = (a,b,c,d,e,f) \in {S}$. We say that $s$ is \emph{completable} if 
\begin{enumerate}[label=(\roman*)]
    \item $|\{a,c,e\}| = 3$, and
    \item $|\{a,b,c,d,e,f\}| = 3$.
\end{enumerate}

\begin{lemma}
    \label{lemma:halin-tripole-palette}
    Let $H$ be a cubic Halin graph and let $v$ be a peripheral vertex of $H$. Then $H$ is totally $4$-colorable if and only if $P(T^H_v)$ contains a completable extendable precoloring.
\end{lemma}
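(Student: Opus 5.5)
The plan is to compare total 4-colorings of $H$ with extendable precolorings of $\partial T^H_v$ directly. Since $v$ is peripheral, it has degree three, and its three neighbours are the three endpoints of the semi-edges of $T^H_v$. One neighbour is the spanning neighbour $u$ of $v$ (the root vertex $r^*$, reached through the root semi-edge $r$). The other two are the peripheral neighbours $x^*$ and $y^*$, reached through the semi-edges $x$ and $y$. In $H$ these semi-edges are exactly the three edges $vu$, $vx^*$, $vy^*$. So a total coloring of $H$ is the same thing as a total coloring $\varphi$ of the tripole together with a colour for $v$, subject to two extra constraints: the colour of $v$ differs from the colours of the three edges and of the three neighbours, and the three edges at $v$ receive pairwise distinct colours. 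The tripole coloring already guarantees that each semi-edge differs in colour from its inner endpoint, and that all other incidences inside $T^H_v$ are fine.

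For the forward direction, take a total 4-coloring $\psi$ of $H$ and let $s=\psi|_{\partial T^H_v}=(a,b,c,d,e,f)$. By construction $s$ is extendable.
\begin{itemize}
\item Condition (i) holds because $a,c,e$ are the colours of the three edges at $v$, which are pairwise distinct.
\item For condition (ii), let $g=\psi(v)$. Then $g\notin\{a,c,e\}$, so $\{a,c,e,g\}=\{0,1,2,3\}$. Since each neighbour of $v$ has a colour different from $g$, we get $b,d,f\in\{a,c,e\}$. Hence $|\{a,b,c,d,e,f\}|=3$.
\end{itemize}

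Conversely, let $s=(a,b,c,d,e,f)$ be a completable extendable precoloring, realised by a total 4-coloring $\varphi$ of $T^H_v$. By (i) the set $\{a,c,e\}$ has size three, and by (ii) the whole set $\{a,b,c,d,e,f\}$ has size three, so $b,d,f\in\{a,c,e\}$. Colour $v$ with the unique colour $g\notin\{a,c,e\}$. Then $v$ differs from its three incident edges and from its three neighbours, and its incident edges are pairwise distinct. Every other adjacency or incidence of $H$ lies inside $T^H_v$, where it is already handled by $\varphi$, so this is a total 4-coloring of $H$.

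The only step needing care is identifying the boundary correctly. One must check that all three edges at $v$ really are the three semi-edges of $T^H_v$, with $v$ not adjacent to itself and no edge of $H$ joining two neighbours through $v$. One must also check that the endpoints $r^*,x^*,y^*$ are distinct vertices. These hold because $v$ is peripheral of degree $3$ and $H$ is simple. Even if, in a degenerate small case, two of the endpoints coincided, the argument is unaffected, since only the colours $b,d,f$ matter.
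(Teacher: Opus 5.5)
Your proof is correct and follows the paper's argument essentially step for step. The forward direction reads off (i) from the three distinct edge colours at $v$ and (ii) from the fact that all six boundary elements avoid $\psi(v)$; the converse colours $v$ with the unique colour outside $\{a,c,e\}$, which you justify more explicitly than the paper by noting $b,d,f\in\{a,c,e\}$.
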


\beginproof
Assume $H$ is totally $4$-colorable. Then there exists a total coloring $\varphi$ of $H$. Then
\begin{enumerate}[label=(\roman*)]
    \item the three edges around $v$ are all colored distinct, and \label{arg:1}
    \item there are precisely $3$ distinct colors (argument \ref{arg:1} being the lower bound) on the edges and vertices incident with and adjacent to $v$.
\end{enumerate}
These conditions translate to the conditions in the definition of a completable palette immediately.

On the other hand, assume that $T^H_v$ has a completable extendable precoloring. By definition, it extends to a total coloring $\varphi$ of $T^H_v$. We may extend it to a coloring of $H$:
Since $|\{a,c,e\}| = 3$, there is no conflict among the edges incident with $v$. Furthermore, the condition $|\{a,b,c,d,e,f\}| = 3$ implies that there exists a color $i \notin \{a,b,c,d,e,f\}$ which can be used to color the vertex $v$. \eproof

\smallskip
Actually, the palette of a given Halin tripole can be efficiently computed using a dynamic programming approach.

Let $T_1$ and $T_2$ be two (not necessarily non-trivial) Halin tripoles with extended boundaries $(r_1,r_1^*,x_1,x_1^*,y_1,y_1^*)$ and $(r_2,r_2^*,x_2,x_2^*,y_2,y_2^*)$. Then the tripole defined by
$$
\begin{aligned}
    V(T) &= V(T_1)\cup V(T_2) \cup \{r^*\} \\
    E(T) &= (E(T_1)\setminus \{r_1,y_1\}) \cup (E(T_2)\setminus\{r_2,x_2\})\cup \{y_1^*x_2^*,r_1^*r^*,r_2^*r^*,r\}
\end{aligned}
$$
is a Halin tripole with extended boundary $(r,r^*,x_1,x^*_1,y_2,y^*_2)$; we will call it the \emph{composition} of $T_1$ and $T_2$ and we will write simply $T = T_1 + T_2$. It is straightforward to check the following statement.

\begin{lemma}
    Let $T$ be a non-trivial Halin tripole of rank $k$ rooted at the vertex $r$. Let $r_1$ and $r_2$ be the neighbors of $r$ in $T$. Then $T = T_1+T_2$, where $T_1$ and $T_2$ are Halin tripoles rooted at $r_1$ and $r_2$ of ranks $k_1$ and $k_2$ with $k_1+k_2+1=k$.
\end{lemma}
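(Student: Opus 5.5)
The plan is to read the decomposition directly off the structure of the internal tree. Let $T = H[X]\cup E(X,Y)$ be defined by a spanning edge $e$ of $H$, with root vertex $r\in X$ (so $e$ is the root semi-edge). Since $T$ is non-trivial, $r$ is a spanning vertex, that is, an internal vertex of the tree $\mathcal{T}$ of $H$. Internal vertices have degree three in $H$, and all three of $r$'s edges are tree edges. One of them is $e$, so $r$ has exactly two neighbours $r_1,r_2$ in $T$, and both edges $rr_1$ and $rr_2$ are spanning. Removing the edge $rr_i$ splits $\mathcal{T}$ into two subtrees; let $X_i$ be the vertex set of the one containing $r_i$. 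Then $X_1, X_2 \subseteq X\setminus\{r\}$ are disjoint, and $X = X_1\cup X_2\cup\{r\}$, because the subtree on $X$ is $r$ together with its two branches.

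By definition, $T_i := H[X_i]\cup E(X_i, V(H)\setminus X_i)$ is a Halin tripole rooted at $r_i$, with root semi-edge corresponding to $rr_i$. Its other two semi-edges are the two peripheral edges leaving $X_i$. The key step is to identify these using the planar embedding. The leaves in each $X_i$ (and in $X$) form a contiguous arc of the outer cycle, because the tree is plane-embedded and the cycle passes through the leaves in their planar order. The arc of $X$ is the concatenation of the arcs of $X_1$ and $X_2$, taken in the rotation order at $r$. Label so that $T_1$ comes first in the orientation used for extended boundaries.

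With this labelling, the two boundary peripheral edges of $X$ are $x_1$ (the outer semi-edge of $T_1$) and $y_2$ (of $T_2$). The edge joining the two arcs is a single peripheral edge, which is both $y_1$ in $T_1$ and $x_2$ in $T_2$; it equals $y_1^*x_2^*$. Comparing with the definition of composition, $E(T)$ is $E(T_1)\setminus\{r_1,y_1\}$, plus $E(T_2)\setminus\{r_2,x_2\}$, plus the edges $y_1^*x_2^*$, $r_1^*r$, $r_2^*r$, and the root $r$. This matches, and so does the extended boundary $(r,r^*,x_1,x_1^*,y_2,y_2^*)$. Hence $T=T_1+T_2$.

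For the ranks: the spanning vertices of $T$ are $r$ together with the spanning vertices of $T_1$ and those of $T_2$, and these sets are disjoint. So $k=k_1+k_2+1$. Note that $T_i$ may be trivial, when $r_i$ is a leaf. The degenerate case $r_1^*=x_1^*=y_1^*$ still fits the composition formula.

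The main obstacle is the orientation bookkeeping: checking that the clockwise order of the semi-edges, which enters the definition of $\partial T$, is respected, so that it is really $y_1$ and $x_2$ that get glued rather than some other pairing. I would handle this by fixing the convention that the semi-edges of a tripole are read in the orientation of the plane embedding starting from the root. Then I would verify that the cyclic order $(r, x_1, y_1{=}x_2, y_2)$ around the face boundary is consistent.
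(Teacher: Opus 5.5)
Your proposal is correct and is essentially the routine verification the paper intends; the paper gives no written proof and only calls the statement straightforward. You split the tree at $r$, use contiguity of the leaf arcs to identify the single peripheral edge between the two branches as $y_1=x_2$, and count spanning vertices. You are also right that, since $+$ is not commutative, $r_1$ and $r_2$ must be labelled according to a fixed orientation convention for $x$ and $y$, which the paper leaves implicit.
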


\begin{figure}
    \centering
    \includegraphics{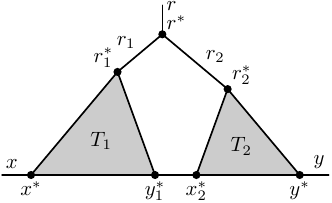}
    \caption{A Halin tripole $T$ decomposes into two Halin tripoles $T_1$ and $T_2$. }
    \label{fig:halin-tripole-decomp}
\end{figure}

Observe that the binary operation $+$ on Halin tripoles is neither commutative nor associative.

It is easy to observe that if $T=T_1+T_2$, then any total 4-coloring of $T$ induces total 4-colorings of both $T_1$ and $T_2$. On the other hand, not every pair or total 4-colorings of $T_1$ and $T_2$ can be combined into a total 4-coloring of $T$. To precise when this is the case, we introduce the following definition.

\begin{definition}
Let $s_1 = (a_1,b_1,c_1,d_1,e_1,f_1), s_2=(a_2,b_2,c_2,d_2,e_2,f_2) \in S$ be two sextuples.  We say that $s_1$ and $s_2$ are \emph{composable} if
\begin{enumerate}[label=(\roman*)]
    \item $e_1 = c_2$, \label{def:compos-1}
    \item $|\{e_1,f_1,c_2,d_2\}| = 3$, \label{def:compos-2}
    \item $a_1 \neq a_2$, and \label{def:compos-3}
    \item $|\{a_1,b_1,a_2,b_2\}| \leq 3$. \label{def:compos-4}
\end{enumerate}

Let $P_1$ and $P_2$ be two palettes.
Let $s_1=(a_1,b_1,c_1,d_1,e_1,f_1) \in P_1$ and $s_2=(a_2,b_2,c_2,d_2,e_2,f_2) \in P_2$ be a pair of composable sextuples.  We say that $s_1$ and $s_2$ \emph{yield} a sextuple $s=(a,b,c,d,e,f)$ if
\begin{enumerate}[label=(\Roman*)]
    \item $c=c_1$, $d=d_1$, $e=e_2$, $f=f_2$,\label{def:yield-1}
    \item $b \notin\{a_1,b_1,a_2,b_2\}$,\label{def:yield-2}
    \item $a\notin \{b,a_1,a_2\}$.\label{def:yield-3}
\end{enumerate}
Let
$$s_1\oplus s_2 := \{s\in {S} \mid \textrm{$s_1$ and $s_2$ yield $s$} \}$$
if $s_1$ and $s_2$ are composable, otherwise let $s_1\oplus s_2 := \emptyset$.
Finally, let
$$
P_1\oplus P_2 := \bigcup_{\substack{s_1\in P_1\\ s_2\in P_2}} s_1\oplus s_2.
$$
\end{definition}

Notice that we cannot define the composition of composable sextuples as a binary operation, since it might not be unique if $|\{a_1,b_1,a_2,b_2\}|=2$.

\begin{lemma}
    Let $T$ be a non-trivial Halin tripole such that $T = T_1 + T_2$. 
    Then $P(T) = P(T_1)\oplus P(T_2)$.
\label{lemma:decompose}
\end{lemma}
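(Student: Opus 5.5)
We prove the two inclusions separately. Throughout, write $\partial T_1 = (r_1,r_1^*,x_1,x_1^*,y_1,y_1^*)$ and $\partial T_2 = (r_2,r_2^*,x_2,x_2^*,y_2,y_2^*)$. In $T = T_1+T_2$ the semi-edge $y_1$ and the semi-edge $x_2$ become the single peripheral edge $y_1^*x_2^*$. The root semi-edges $r_1, r_2$ become the edges $r_1^*r^*$ and $r_2^*r^*$, and $r$ is the new root semi-edge at $r^*$. The whole argument is a local check at the two new "gluing sites": the edge $y_1^*x_2^*$ and the vertex $r^*$.

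For $P(T)\subseteq P(T_1)\oplus P(T_2)$, take a total 4-coloring $\varphi$ of $T$ and restrict it to $T_i$. Each semi-edge of $T_i$ receives the color of the corresponding edge of $T$, so we get total colorings $\varphi_1, \varphi_2$ with boundary sextuples $s_1, s_2$. We then verify composability.
\begin{enumerate}[label=(\roman*)]
    \item $e_1=c_2$, since both equal $\varphi(y_1^*x_2^*)$.
    \item The colors $e_1, f_1, d_2$ are pairwise distinct: $f_1=\varphi(y_1^*)$ and $d_2=\varphi(x_2^*)$ are distinct because these vertices are adjacent, and each differs from the color of the edge joining them. Hence $|\{e_1,f_1,c_2,d_2\}|=3$.
    \item $a_1\ne a_2$, as these are two edges incident with $r^*$.
    \item $b=\varphi(r^*)$ differs from $a_1,a_2$ (incident edges) and from $b_1,b_2$ (neighbors). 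So $\{a_1,b_1,a_2,b_2\}$ avoids a color, giving size at most $3$.
\end{enumerate}
This same observation gives condition (II). Condition (III) holds because $a=\varphi(r)$ differs from $\varphi(r^*)$ and from the other two edges at $r^*$. Condition (I) is immediate from the definition of the extended boundary of $T$. Hence $\varphi|_{\partial T}\in s_1\oplus s_2$.

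For the converse, take $s\in s_1\oplus s_2$ with $s_i\in P(T_i)$ realized by colorings $\varphi_i$ of $T_i$. Define $\varphi$ on $T$ as follows:
\begin{itemize}
    \item use $\varphi_i$ on all vertices and edges of $T_i$ other than the semi-edges $y_1,x_2,r_1,r_2$;
    \item color $y_1^*x_2^*$ with $e_1=c_2$, and $r_i^*r^*$ with $a_i$;
    \item color $r^*$ with $b$ and the semi-edge $r$ with $a$.
\end{itemize}
We must check that every pair of adjacent or incident elements of $T$ gets distinct colors.

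Pairs lying entirely inside one $T_i$ are fine, because the glued edges carry the same colors the semi-edges had in $\varphi_i$. The only new adjacencies are those involving the edge $y_1^*x_2^*$, the vertex $r^*$, and the edges at $r^*$.
\begin{itemize}
    \item At $y_1^*x_2^*$, the new constraints are $f_1\ne d_2$ (adjacent vertices) and the edge color differing from both endpoints. Both are exactly condition (ii). Edge–edge conflicts at $y_1^*$ or $x_2^*$ are already handled inside $T_1$ or $T_2$, since the semi-edge was present there.
    \item At $r^*$, we need $a_1\ne a_2$, which is (iii). We need $b$ distinct from $a_1,a_2,b_1,b_2$, which is (II). We need $a$ distinct from $b,a_1,a_2$, which is (III).
\end{itemize}
Condition (iv) is not needed for this direction; it only guarantees that $s_1\oplus s_2\neq\emptyset$ is consistent. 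Finally, $\varphi|_{\partial T}=s$ by (I).

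The main obstacle is bookkeeping rather than depth. One must make sure that no adjacency is missed: for instance, $x_1^*$ or $y_2^*$ might coincide with $r_i^*$ when $T_i$ is trivial. In the trivial case the constraints around $r_i^*$ are still all internal to $T_i$, since its semi-edges are all present in $T_i$. One must also make sure that no adjacency arises between $T_1$ and $T_2$ other than via $y_1^*x_2^*$ and $r^*$; this is clear from the definition of $E(T)$.
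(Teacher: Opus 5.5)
Your proof is correct and takes essentially the same approach as the paper. In one direction you restrict a total coloring of $T$ to $T_1$ and $T_2$ and check conditions (i)--(iv) and (I)--(III); in the other you glue the two colorings along $y_1^*x_2^*$ and at $r^*$. Your two extra observations are accurate refinements of the paper's terser proof: in the converse, (iv) is already implied by (II); and the coincidences $r_i^*=x_i^*=y_i^*$ for trivial $T_i$ create no new conflicts.
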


\beginproof
Let $s = (a,b,c,d,e,f) \in {S}$. We will prove that 
$s\in P(T)$ if and only if there exist $(a_1,b_1,c_1,d_1,e_1,f_1) \in P(T_1)$ and $(a_2,b_2,c_2,d_2,e_2,f_2) \in P(T_2)$ which are composable and which yield $s$.

Let $\partial T = (r,r^*,x,x^*,y,y^*)$, let $\partial T_1 = (r_1,r^*_1,x_1,x^*_1,y_1,y^*_1)$, and let $\partial T_2 = (r_2,r^*_2,x_2,x^*_2,y_2,y^*_2)$. Then $x=x_1$, $x^*=x^*_1$, $y=y_2$, $y^*=y^*_2$, the vertices $y^*_1$ and $x^*_2$ are adjacent in $T$ and the semi-edges $x_2$ and $y_1$ correspond to the same edge $x^*_2y^*_1$ in $T$.

Assume $s\in P(T)$. Then $s$ is an extendable precoloring of $\partial T$, and so there exists a total 4-coloring $\varphi$ of $T$, which induces total 4-colorings $\varphi_1$ and $\varphi_2$ of $T_1$ and $T_2$, respectively. Let 
$s_1 = \varphi_1|_{\partial T_1} = (a_1,b_1,c_1,d_1,e_1,f_1)$ and let
$s_2 = \varphi_2|_{\partial T_2} = (a_2,b_2,c_2,d_2,e_2,f_2)$. Clearly, $s_1\in P(T_1)$ and $s_2\in P(T_2)$. We have 
$$
e_1 = \varphi_1(y_1) = \varphi (x^*_2y^*_1) = \varphi_2(x_2) = c_2
$$
and, moreover, the colors $f_1 = \varphi(y^*_1)$ and $d_2 = \varphi(x^*_2)$ being the colors of the endvertices of $x^*_2y^*_1$, they are different from each other and they are both distinct from $e_1=c_2 = \varphi(x^*_2y^*_1)$, and so $s_1$ and $s_2$ satisfy the conditions \ref{def:compos-1} and \ref{def:compos-2} in the definition of a composable pair. Moreover, they satisfy \ref{def:compos-3} since $a_1$ and $a_2$ are the colors of two distinct edges incident with $r^*$, and they satisfy \ref{def:compos-4} as the colors $a_1$, $b_1$, $a_2$, $b_2$ are all distinct from $\varphi(r^*)$. Hence, $s_1$ and $s_2$ are composable. It is a routine check that $s_1$ and $s_2$ yield $s$: The colors of identical elements (edges or vertices) coincide, and so \ref{def:yield-1} is true; the conditions \ref{def:yield-2} and \ref{def:yield-3} follow directly from the definition of a total coloring.

On the other hand, assume that there exist $s_1\in P(T_1)$ and $s_2\in P(T_2)$ that are composable and they yield $s$. Then $s_1$ ($s_2$) extends to a total 4-coloring of $T_1$ ($T_2$, respectively). We claim that we can combine the two colorings into a total 4-coloring $\varphi$ of $T$. Indeed, there is no conflict around the edge $x^*_2y^*_1$ thanks to conditions \ref{def:compos-1} and \ref{def:compos-2}, and there is no conflict between $r^*_1r^*$ and $r^*_2r^*$ by \ref{def:compos-3}. 
Thanks to the conditions \ref{def:compos-4} and \ref{def:yield-2}, the color $b$ can be assigned to $r^*$, and by \ref{def:yield-3}, the color $a$ can be assigned to $r$. The coloring $\varphi$ clearly satisfies \ref{def:yield-1}.  \eproof

Observe that since the size of a palette is bounded by a constant, the composition of two palettes can always be computed in a constant time.

\begin{theorem}
    Let $H$ be a cubic Halin graph. An instance of total 4-coloring in $H$ can be found in linear time (if it exists).
\end{theorem}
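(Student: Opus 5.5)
We use dynamic programming over the internal tree, combining Lemma~\ref{lemma:decompose} with Lemma~\ref{lemma:halin-tripole-palette}.

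\textbf{Setup.} Recognise the tree $T$ and the peripheral cycle of $H$; this can be done in linear time, or we may assume the Halin structure is given. Fix a peripheral vertex $v$ and form the tripole $T^H_v$.

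\textbf{Decomposition tree.} If $T^H_v$ is non-trivial, the lemma preceding Lemma~\ref{lemma:decompose} writes it as $T_1+T_2$, where $T_1$ and $T_2$ are rooted at the two neighbours of the root vertex. Applying this recursively gives a binary decomposition tree. Its leaves are trivial tripoles, one for each peripheral vertex other than $v$. Its internal nodes correspond to spanning vertices, and the ranks add up correctly. The tree therefore has $O(|V(H)|)$ nodes, and it can be built by one DFS of $T$ from the neighbour of $v$. At each spanning vertex, the planar embedding tells us which child is $T_1$ and which is $T_2$, i.e. the left/right order of the semi-edges $x$ and $y$.

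\textbf{Palettes bottom-up.} For a trivial tripole, the palette is just the set of all $s=(a,b,c,d,e,f)\in S$ with $b=d=f$ and $a,c,e$ pairwise distinct and different from $b$. Here $b=d=f$ because $r^*=x^*=y^*$. This is a constant-size set computable in constant time. Going up the tree, compute $P(T)=P(T_1)\oplus P(T_2)$ using Lemma~\ref{lemma:decompose}. Every palette is a subset of $S$, and $|S|=4^3\cdot 3^3$ is a constant, so each $\oplus$ costs constant time. The total running time is linear.

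\textbf{Decision.} At the top, Lemma~\ref{lemma:halin-tripole-palette} says that $H$ is totally $4$-colorable if and only if $P(T^H_v)$ contains a completable sextuple. This is a constant-time check.

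\textbf{Constructing a coloring.} Deciding existence is not enough: we must also output a coloring, and this is the main point needing care. At each node, for every $s\in P(T)$ we store one witnessing pair $(s_1,s_2)$ with $s_1\in P(T_1)$, $s_2\in P(T_2)$, $s_1,s_2$ composable and yielding $s$. We can record this during the computation of $\oplus$ at constant extra cost per node.

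Then we trace top-down. Start from a completable $s\in P(T^H_v)$ and assign colors to the root edge $r$ and root vertex $r^*$ as dictated by $s$. At each node, retrieve the stored $(s_1,s_2)$, assign the colors $a_1$ and $a_2$ to the edges $r_1^*r^*$ and $r_2^*r^*$, and assign $e_1=c_2$ to the edge $x^*_2y^*_1$. Then recurse into $T_1$ with $s_1$ and into $T_2$ with $s_2$. At the leaves, the sextuple fixes the colors of the peripheral vertex and its three incident edges.

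The second half of the proof of Lemma~\ref{lemma:decompose} guarantees that the pieces combine without conflicts. The proof of Lemma~\ref{lemma:halin-tripole-palette} shows how to color $v$ with a color outside $\{a,b,c,d,e,f\}$. Each element of $H$ is colored a constant number of times, so this phase is also linear. The only bookkeeping subtlety is identifying semi-edges with actual edges consistently with the embedding; once the decomposition tree respects the orientation, this is routine.
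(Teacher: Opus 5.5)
Your proposal is correct and follows essentially the same route as the paper: compute $P(T^H_v)$ bottom-up via Lemma~\ref{lemma:decompose}, test for a completable sextuple using Lemma~\ref{lemma:halin-tripole-palette}, then extend top-down by choosing a composable pair $s_1,s_2$ that yields the current sextuple at each node, and finally color $v$. The only difference is cosmetic. You record a witnessing pair $(s_1,s_2)$ for each sextuple during the bottom-up pass, whereas the paper memorizes the palettes and searches for such a pair during the top-down pass; both cost constant time per node.
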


\beginproof
We can arbitrarily choose a peripheral vertex $v$ of $H$ and calculate $P(T^H_v)$ by applying Lemma \ref{lemma:decompose} recursively, memorizing the palettes of all the (linearly many) sub-tripoles along the way. According to Lemma \ref{lemma:halin-tripole-palette}, it suffices to check whether $P(T^H_v)$ contains a sextuple that is completable (which can be done in constant time). If this is the case, then we fix such a completable precoloring $s$, and extend it into a total 4-coloring of $T^*_v$ by the following algorithm: For a given non-trivial Halin tripole $T$ and its extendable precoloring $s$, consider the tripoles $T_1$ and $T_2$ such that $T = T_1 + T_2$ (whose palettes have already been calculated) and pick precolorings $s_1\in P(T_1)$ and $s_2\in P(T_2)$ such that $s_1$ and $s_2$ yield $s$; set the colors of the root edges and root vertices of $T_1$ and $T_2$ to be those given by $s_1$ and $s_2$; continue recursively for $T_1$ and $T_2$. For a trivial Halin tripole, there is nothing to do, the colors of all its elements (vertex and edges) are already defined in any extendable precoloring. Now that we have a total 4-coloring of $T^H_v$, it suffices to color the vertex $v$, which is possible due to $s$ being completable.
\eproof 

\subsection{Palettes of Halin tripoles}
\label{subsec:palettes}

Let $P_0$ be the palette of the Halin tripole of rank 0. A set of palettes $\mathcal{P}$ is \emph{closed under composition} if for any two palettes $P_x, P_y \in \mathcal{P}$ we have $P_x \oplus P_y \in \mathcal{P}$. Let $\mathcal{P}^+$ be the inclusion-wise minimal set of palettes containing $P_0$ closed under composition. Clearly, $\mathcal{P}^+$ is the set of all realizable palettes. 

We define the \emph{rank} of a realizable palette $P$ as the smallest $k$ such that $P = P(T)$ for a Halin tripole $T$ of rank $k$. For instance, 
$$
P_0=\{(b,a,c,a,d,a)\in \mathcal{S} \mid \{a,b,c,d\}=\{0,1,2,3\}\}$$
is a palette of rank 0, since it is realized by the trivial tripole. In fact, this is the only palette of rank 0.

Since there are only finitely many palettes, their ranks are bounded by a constant. Let $K$ be a maximum rank of a palette, and for $k=0,1,\dots, K$, let $\mathcal{P}_k$ be the set of palettes of rank $k$ and let $\mathcal{S}_k$ be the set of palettes of rank at most $k$.
Then $\{\mathcal{P}_0,\dots,\mathcal{P}_K\}$ is a partition of a set of realizable palettes $\mathcal{P}^+$, that we will call the \emph{stratification} of $\mathcal{P}^+$; its element $\mathcal{P}_i$ is said to be the \emph{stratum} $i$. The next statement follows directly from the definitions.

\begin{lemma}\label{lemma:recfor}
$P \in \mathcal{S}_k$ if and only if 
$$P\in  \mathcal{S}_{k-1} ~ \textrm{ or } ~ \exists ~ 0\le i \le k-1, ~\exists~ P_1\in \mathcal{P}_{i}, ~\exists P_2\in \mathcal{P}_{k-1-i} \textrm{ such that } P = P_1\oplus P_2.
$$
\end{lemma}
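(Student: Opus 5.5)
We prove both directions by unwinding the definitions of rank and stratification, using the decomposition lemma for non-trivial Halin tripoles together with Lemma~\ref{lemma:decompose}. Recall that $\mathcal{S}_k = \mathcal{P}_0\cup\dots\cup\mathcal{P}_k$, and that $P\in\mathcal{S}_k$ means exactly that $P=P(T)$ for some Halin tripole $T$ of rank at most $k$. The case $k=0$ is degenerate: the existential clause is vacuous and $\mathcal{S}_{-1}=\emptyset$. We therefore read the statement for $k\ge 1$.

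\emph{($\Leftarrow$)} If $P\in\mathcal{S}_{k-1}$, then $P\in\mathcal{S}_k$ trivially. Otherwise $P=P_1\oplus P_2$ with $P_1\in\mathcal{P}_i$ and $P_2\in\mathcal{P}_{k-1-i}$.
\begin{itemize}
\item By the definition of rank, there are Halin tripoles $T_1$ and $T_2$ with $P(T_1)=P_1$ of rank $i$ and $P(T_2)=P_2$ of rank $k-1-i$.
\item Form $T=T_1+T_2$. Its spanning vertices are those of $T_1$, those of $T_2$, and the new root $r^*$. The vertices $y_1^*$ and $x_2^*$ stay peripheral, since they remain on the cycle. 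Hence $r(T)=i+(k-1-i)+1=k$.
\item By Lemma~\ref{lemma:decompose}, $P(T)=P_1\oplus P_2=P$, so $P$ has rank at most $k$ and $P\in\mathcal{S}_k$.
\end{itemize}
One small point needs checking: the composition of two Halin tripoles is again a Halin tripole, i.e.\ arises from a Halin graph via a spanning edge. The paper asserts this when defining $+$. To make it concrete, close $T$ off by attaching a suitable tripole on the other side of the root edge, for instance a single vertex adjacent to the three semi-edge endpoints.

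\emph{($\Rightarrow$)} Suppose $P\in\mathcal{S}_k\setminus\mathcal{S}_{k-1}$. Then $P$ has rank exactly $k\ge1$, so $P=P(T)$ for some Halin tripole $T$ of rank $k$, which is non-trivial.
\begin{itemize}
\item By the decomposition lemma, $T=T_1+T_2$ with ranks $k_1+k_2=k-1$.
\item By Lemma~\ref{lemma:decompose}, $P=P(T_1)\oplus P(T_2)$.
\end{itemize}
The main obstacle appears here. The palette $P(T_1)$ lies in $\mathcal{S}_{k_1}$, but its rank $j_1$ may be strictly less than $k_1$, and similarly for $P(T_2)$. So we cannot directly exhibit $P(T_1)\in\mathcal{P}_i$ and $P(T_2)\in\mathcal{P}_{k-1-i}$. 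The resolution is a minimality argument.
\begin{itemize}
\item Let $P(T_1)\in\mathcal{P}_{j_1}$ and $P(T_2)\in\mathcal{P}_{j_2}$, and pick tripoles $T_1'$ and $T_2'$ realizing these palettes with ranks $j_1$ and $j_2$.
\item Then $T_1'+T_2'$ has rank $j_1+j_2+1$, and by Lemma~\ref{lemma:decompose} its palette is $P(T_1)\oplus P(T_2)=P$.
\item Minimality of the rank of $P$ forces $j_1+j_2+1\ge k$. Since $j_1\le k_1$ and $j_2\le k_2$, we get $j_1=k_1$ and $j_2=k_2$.
\end{itemize}
Taking $i=k_1$ gives $P(T_1)\in\mathcal{P}_i$ and $P(T_2)\in\mathcal{P}_{k-1-i}$, which is the existential clause.
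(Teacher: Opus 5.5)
Your proof is correct and follows the same route as the paper, which offers no argument beyond saying the lemma follows directly from the definitions, together with the decomposition lemma and Lemma~\ref{lemma:decompose}. The paper leaves three details implicit, and you supply all of them: the minimality step showing that $P(T_1)$ and $P(T_2)$ must have ranks exactly $k_1$ and $k_2$, the check that ranks add under $+$, and the observation that the statement only makes sense for $k\ge 1$, with $\mathcal{S}_0=\{P_0\}$ as the base case.
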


We can apply the formula from Lemma \ref{lemma:recfor} iteratively until $\mathcal{S}_k$ stabilizes. Since $\mathcal{P}_k = \mathcal{S}_k \setminus \mathcal{S}_{k-1}$, we can compute the complete stratification in this way.

 A palette is \emph{incompletable} if it does not contain any completable sextuple.

\begin{table}[ht]
    \centering
    \begin{tabular}{|c|c|c|c|}\hline
         Stratum & All & UD & Incompletable\\\hline
         $\mathcal{P}_0$ & 1 & 1 & 1 \\
         $\mathcal{P}_1$ & 1 & 1 & 1\\
         $\mathcal{P}_2$ & 2 & 2 & 0\\
         $\mathcal{P}_3$ & 5 & 5 & 0\\
         $\mathcal{P}_4$ & 14 & 14 & 6\\
         $\mathcal{P}_5$ & 38 & 27 & 7\\
         $\mathcal{P}_6$ & 83 & 62 & 0\\
         $\mathcal{P}_7$ & 165 & 116 & 0\\
         $\mathcal{P}_8$ & 239 & 152 & 7\\
         $\mathcal{P}_9$ & 207 & 114 & 0\\
         $\mathcal{P}_{10}$ & 201 & 108 & 0\\
         $\mathcal{P}_{11}$ & 137 & 80 & 0\\
         $\mathcal{P}_{12}$ & 87 & 50 & 0\\
         $\mathcal{P}_{13}$ & 20 & 6 & 0\\
         $\mathcal{P}_{14}$ & 6  & 4 & 0\\
         $\mathcal{P}_{15}$ & 8 & 6 & 0\\\hline
         $\mathcal{P}_{i}, i \ge 16$ & 0 & 0 & 0 \\\hline
         total & 1214 & 748 & 22 \\\hline
    \end{tabular}
    \caption{The strata of $\mathcal{P}^+$, with total number of palettes per stratum (first column), number of uniquely decomposable (UD) palettes per stratum (second column), and the number of incompletable palettes per stratum (last column).}
    \label{tab:stratification-cubic}
\end{table}

\begin{lemma}\label{lemma:p-plus-size}
    The size of $\mathcal{P}^+$ is 1214. Its stratification consists of $16$ non-empty strata. There are 22 incompletable palettes in $\mathcal{P}^+$.
\end{lemma}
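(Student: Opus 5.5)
The statement is a computational claim, so I would prove it by an exhaustive, certified computation organized along the stratification of Lemma~\ref{lemma:recfor}.

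\textbf{Encoding and base case.} Encode a palette as a subset of $S$. Since $|S| = 4^3\cdot 3^3 = 1728$, a palette fits in a bitvector of length $1728$. Implement the operation $\oplus$ literally from the definition:
\begin{itemize}
\item for each pair $(s_1,s_2)\in P_1\times P_2$, test composability via conditions (i)--(iv);
\item if they are composable, enumerate all $s$ satisfying (I)--(III).
\end{itemize}
Lemma~\ref{lemma:decompose} guarantees that $P(T_1+T_2)=P(T_1)\oplus P(T_2)$. Start with $\mathcal{S}_0=\mathcal{P}_0=\{P_0\}$, where $P_0$ is computed directly from the trivial tripole. Every non-trivial Halin tripole is a composition (by the decomposition lemma preceding Lemma~\ref{lemma:decompose}), and the rank of a composition is $k_1+k_2+1$. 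Hence Lemma~\ref{lemma:recfor} correctly generates $\mathcal{S}_k$ from the strata $\mathcal{P}_0,\dots,\mathcal{P}_{k-1}$.

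\textbf{Iteration and termination.} Iterate $k=1,2,\dots$, setting $\mathcal{P}_k=\mathcal{S}_k\setminus\mathcal{S}_{k-1}$, where $\mathcal{S}_k$ is obtained by adding all products $P_1\oplus P_2$ with $P_1\in\mathcal{P}_i$ and $P_2\in\mathcal{P}_{k-1-i}$.

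The key point is the stopping rule. Merely observing one empty stratum is not enough, since a later stratum could in principle be fed by products of earlier ones. Suppose instead that $\mathcal{P}_{16},\dots,\mathcal{P}_{31}$ are all empty, i.e.\ $16$ consecutive empty strata after the last non-empty stratum $\mathcal{P}_{15}$. Then for every $k\ge 32$, each product contributing to $\mathcal{S}_k$ has index pair with $i+(k-1-i)=k-1\ge 31$, so at least one index exceeds $15$ and that stratum is empty. By induction all further strata are empty.

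Equivalently, and more simply, one can check closure directly: verify that $\bigcup_{k\le 15}\mathcal{P}_k$ is closed under $\oplus$ for all ordered pairs. Closure, together with containing $P_0$, and the fact that every element is realized, shows that this union equals $\mathcal{P}^+$ by minimality. I would use this closure check as the termination certificate.

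\textbf{Counting and incompletability.} Record the total size $1214$ and the number of non-empty strata, $16$. Then test each palette for a completable sextuple using conditions (i),(ii) of the definition, and count the failures to get $22$. As a cross-check I would also:
\begin{itemize}
\item confirm the per-stratum counts of Table~\ref{tab:stratification-cubic};
\item verify via Lemma~\ref{lemma:halin-tripole-palette} that the tripoles $T^{H}_v$ for $K_4,H_4,H_5,H_8$ have incompletable palettes.
\end{itemize}

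\textbf{Main obstacle.} The difficulty is not conceptual but lies in trusting the computation. Products are taken over up to about $1214^2$ ordered pairs, each of which may cost up to $|P_1||P_2|$ sextuple comparisons. I would speed this up by bucketing the sextuples of $P_1$ by $e_1$ and those of $P_2$ by $c_2$. I would also independently reimplement $\oplus$, for instance by brute-force total colorings of small explicit tripoles, to validate the encoding of conditions (i)--(iv) and (I)--(III) before running the full closure.
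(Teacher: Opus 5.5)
Your proposal is correct and takes the same approach as the paper: the paper's proof is a computer run of the recursion in Lemma~\ref{lemma:recfor} until no new palettes appear, followed by counting the strata and the incompletable palettes. Your explicit termination certificate sharpens the paper's informal stopping rule ``until no new palettes are created'', since a single empty stratum would not by itself rule out later non-empty ones. The certificate is either checking that $\mathcal{P}_{16},\dots,\mathcal{P}_{31}$ are empty, or checking directly that $\mathcal{S}_{15}$ is closed under $\oplus$.
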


\beginproof
We have implemented a program to execute the formula from Lemma \ref{lemma:recfor} until no new palettes are created.
\eproof

The results, i. e., sizes of non-empty strata,  can be seen in Table \ref{tab:stratification-cubic}.

The graphs $K_4$, $H_4$, $H_5$, and $H_8$ give rise to 1, 6, 7, and 10 cubic Halin tripoles of the form $T^H_v$, respectively, all of them having incompletable palettes. Moreover, the palette $P_0$ of the trivial cubic Halin tripole is also incompletable. Let us call these tripoles \emph{bad}.
There are 25 bad tripoles but only 22 incompletable palettes. To explain the difference, it suffices to realize that there are three pairs of bad tripoles with identical palettes, denoted by $T^{H_5}_1$, $T^{H_5}_2$, $T^{H_8}_1$, $T^{H_8}_2$, $T^{H_8}_3$, $T^{H_8}_4$, see Figure \ref{fig:ambiguous-tripoles}. Let us denote their palettes by  

$$
\begin{aligned}
P_A = P(T^{H_5}_1) &= P(T^{H_8}_1), \\   
P_B = P(T^{H_5}_2) &= P(T^{H_8}_2), \\
P_C = P(T^{H_8}_3) &= P(T^{H_8}_4).
\end{aligned}
$$

\begin{figure}[ht]
    \centering
    \begin{tabular}{c c c}
         \includegraphics[]{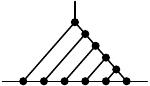}& \includegraphics[]{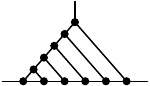} & \includegraphics[]{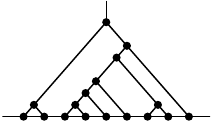}  \\
         $T^{H_5}_1$&$T^{H_5}_2$ & $T^{H_8}_3$\\
         & & \\
         \includegraphics[]{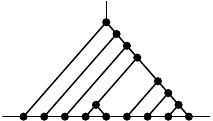}& \includegraphics[]{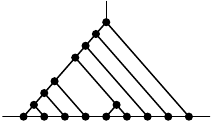} & \includegraphics[]{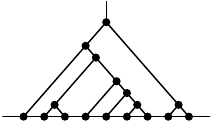} \\
         $T^{H_8}_1$&$T^{H_8}_2$ & $T^{H_8}_4$
    \end{tabular}
    \caption{The six bad tripoles without unique incompletable palettes.}
    \label{fig:ambiguous-tripoles}
\end{figure}

Let $T^0, T^1,T^4_1,T^4_2,T^6_1, T^6_2, T^7_1,$ and $T^7_2$ be tripoles such that

$$
\begin{aligned}
    T^{H_5}_1 &= T^0 + T^4_1, \qquad T^{H_5}_2 = T^4_2 + T^0, \qquad T^{H_8}_3 = T^1 + T^6_1, \\
    T^{H_8}_1 &= T^0 + T^7_1, \qquad T^{H_8}_2 = T^7_2 + T^0, \qquad T^{H_8}_4 = T^6_2 + T^1.
\end{aligned}
$$

Note that $T^0$ is the trivial tripole and $T^1$ is the only tripole of rank 1. It was checked by a computer that
$$
\begin{aligned}
    P_A &= P(T^0) + P(T^4_1) = P(T^0) + P(T^7_1), \\
    P_B &= P(T^4_2) + P(T^0) = P(T^7_2) + P(T^0), \\
    P_C &= P(T^1) + P(T^6_1) = P(T^6_2) + P(T^1)
\end{aligned}
$$
are the only possible decompositions of the palettes $P_A$, $P_B$, and $P_C$, respectively.

\begin{lemma}
\label{lemma:trivial}
Let $T$ be a realizable tripole, and let $T^0$ be the trivial tripole. If $P(T) = P(T^0)$ then $T \cong T^0.$ 
\end{lemma}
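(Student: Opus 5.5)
The natural strategy is to find a property of $P_0$ that no palette of a non-trivial tripole can share. Recall that $P_0$ consists exactly of the sextuples $(b,a,c,a,d,a)$ with $\{a,b,c,d\}=\{0,1,2,3\}$. So every $s\in P_0$ has $b=d=f$: the three boundary vertices $r^*,x^*,y^*$ get the same color. In a non-trivial tripole these three vertices are distinct. The plan is to show that $P(T)$ then always contains a sextuple with $b\ne d$ or $d\ne f$, or else that it misses some element of $P_0$.

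The cleanest route is via the decomposition of Lemma~\ref{lemma:decompose}. Let $T$ be non-trivial, so $T=T_1+T_2$. Then $r^*$ is the new vertex and $x^*=x_1^*$. If $P(T)=P_0$, then every $s\in P(T)$ yielded by composable $s_1\in P(T_1)$ and $s_2\in P(T_2)$ satisfies two things:
\begin{itemize}
\item $b=d_1$, where $b\notin\{a_1,b_1,a_2,b_2\}$;
\item $a$ together with $c_1,e_2$ uses all four colors.
\end{itemize}
I would argue by induction on the rank. For a direct argument, I would first show that every realizable palette is nonempty; this holds because every Halin tripole is totally 4-colorable, and I would check it via closure under $\oplus$. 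I would then exhibit, in $P(T)$, a sextuple with $b\ne d$.

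The freedom comes from condition (II) of the yield definition. When $|\{a_1,b_1,a_2,b_2\}|=2$, two choices of $b$ are available, and $d=d_1$ is fixed by $s_1$. So if some composable pair has $|\{a_1,b_1,a_2,b_2\}|=2$, one of the two choices of $b$ differs from $d_1$, and $P(T)\neq P_0$. Also, any composable pair with $d_1\in\{a_1,b_1,a_2,b_2\}$ forces $b\ne d_1$ immediately.

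The main obstacle is guaranteeing that such a favorable composable pair exists for arbitrary $P(T_1)$ and $P(T_2)$. For this I would use the color-permutation symmetry of palettes: realizable palettes are invariant under permutations of $\{0,1,2,3\}$. That symmetry lets one start from any $s_1\in P(T_1)$ and $s_2\in P(T_2)$, then permute colors in $s_2$ so that $c_2=e_1$, $d_2\notin\{e_1,f_1\}$, and $a_2$ is chosen suitably, for example $a_2=b_1$ with $b_2=a_1$. This achieves $|\{a_1,b_1,a_2,b_2\}|=2$. The difficulty is that a single permutation has to satisfy the constraints on $c_2$ and $d_2$ and the constraints on $a_2$ and $b_2$ simultaneously, which can conflict when the colors coincide, for instance when $a_2=c_2$ is impossible but $b_1=e_1$.

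If that hand analysis becomes too case-heavy, I would fall back on the computation already performed for Lemma~\ref{lemma:p-plus-size}. From the stratification one checks that $P_0$ appears only in stratum $\mathcal{P}_0$ and never as $P_1\oplus P_2$ for any $P_1,P_2\in\mathcal{P}^+$. Since every non-trivial $T$ has $P(T)=P(T_1)\oplus P(T_2)$ with $P(T_i)\in\mathcal{P}^+$, this gives $P(T)\ne P_0$, and hence $T\cong T^0$ whenever $P(T)=P(T^0)$.
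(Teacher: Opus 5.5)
Your fallback argument is exactly the paper's proof: a computer check over the enumerated palettes that $P_1\oplus P_2\neq P_0$ for every pair of realizable palettes $P_1,P_2$. Combined with Lemma~\ref{lemma:decompose}, this shows that no non-trivial tripole has palette $P_0$. The hand analysis via color permutations that precedes it is left unfinished, but it is not needed.
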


\beginproof It was checked by a computer that for every pair of realizable palettes $P_1$ and $P_2$, $P_1\oplus P_2 \ne P_0$.
\eproof

A realizable palette $P\in\mathcal{P^+}$ is \emph{uniquely decomposable} if there exists a unique ordered pair of realizable palettes $P_1,P_2\in \mathcal{P}^+$ such that $P = P_1 \oplus P_2$. 

A realizable palette $P\in \mathcal{P}^+$ is \emph{uniquely realizable} if it is either trivial (and then the uniqueness of its realization is given by Lemma \ref{lemma:trivial}) or it is uniquely decomposable into palettes $P_1$ and $P_2$ that are both uniquely realizable. Note that since the ranks of $P_1$ and $P_2$ are strictly smaller than the rank of $P$, the unique realizability is well-defined.

\begin{lemma}
    \label{lemma:uniq-extended}
    Let $T$ be a bad tripole distinct from $T^{H_5}_1$, $T^{H_8}_1$, $T^{H_5}_2$, $T^{H_8}_2$, $T^{H_8}_3$, and $T^{H_8}_4$, or a tripole isomorphic to $T^0$, $T^1$, $T^4_1$, $T^4_2$, $T^6_1$, $T^6_2$, $T^7_1$, or $T^7_2$. Then $P(T)$ is uniquely realizable.
\end{lemma}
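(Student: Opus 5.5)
The claim is a finite statement about explicitly given tripoles, so I will prove it by a computer-assisted check organized by structural induction on rank. The definition of unique realizability is recursive. A palette $P$ is uniquely realizable if it equals $P_0$, or if it has exactly one ordered decomposition $P=P_1\oplus P_2$ with $P_1,P_2\in\mathcal{P}^+$ and both $P_1$ and $P_2$ are uniquely realizable. For each tripole $T$ listed in the statement, I will therefore compute its decomposition tree. This tree is the recursive splitting $T=T_1+T_2$ at the root, given by the decomposition lemma for Halin tripoles, continued down to the trivial leaves. For every node of the tree, I will then verify that its palette is uniquely decomposable and that the unique pair of palettes is exactly the pair of palettes of its two child tripoles.

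The steps, in order, are as follows.

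\textbf{Step 1.} Using the complete list $\mathcal{P}^+$ of 1214 palettes from Lemma~\ref{lemma:p-plus-size}, tabulate for every $P\in\mathcal{P}^+$ all ordered pairs $(P_1,P_2)\in\mathcal{P}^+\times\mathcal{P}^+$ with $P_1\oplus P_2=P$. This is the same data used to produce the UD column of Table~\ref{tab:stratification-cubic}.

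\textbf{Step 2.} For each tripole $T$ in the statement, recursively compute the palettes of all subtripoles via Lemma~\ref{lemma:decompose}. The tripoles are: the bad tripoles $T^H_v$ for $H\in\{K_4,H_4,H_5,H_8\}$ other than the six ambiguous ones, together with $T^0,T^1,T^4_i,T^6_i,T^7_i$.

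\textbf{Step 3.} Check bottom-up that every node palette is uniquely realizable.
\begin{itemize}
\item At a leaf the palette is $P_0$, which is uniquely realizable by Lemma~\ref{lemma:trivial}.
\item At an internal node with $P(T)=P(T_1)\oplus P(T_2)$, the table from Step~1 must show exactly one decomposing pair. That pair is then necessarily $(P(T_1),P(T_2))$, and both of these palettes are uniquely realizable by the induction hypothesis on rank.
\end{itemize}
Since ranks strictly decrease along the tree, this induction is well-founded. The conclusion follows for each listed $T$.

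Two points simplify the verification. First, any subtripole of a tripole in the list that is itself in the list needs to be checked only once. Second, the six excluded bad tripoles are precisely those whose palettes $P_A,P_B,P_C$ admit two decompositions, as recorded just before Lemma~\ref{lemma:trivial}. This is why they must be excluded, while their components $T^0,T^1,T^4_i,T^6_i,T^7_i$ are included instead.

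The main obstacle is Step~3, specifically making sure that no intermediate subtripole in any of these trees has an ambiguous decomposition. This cannot be derived conceptually and depends entirely on the computed decomposition table. I would first check it directly by program over all nodes of the trees, which are of rank at most 8. If some intermediate palette were not uniquely decomposable, the statement as phrased would fail for that tripole, so the check must confirm uniqueness at every node.
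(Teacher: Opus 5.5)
Your proposal is correct and follows the same approach as the paper, whose proof of this lemma is simply ``the statement was checked by a computer.'' Your bottom-up verification of unique decomposability along each listed tripole's decomposition tree is a faithful, more explicit account of what that computation must establish under the recursive definition of unique realizability.
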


\beginproof The statement was checked by a computer.
\eproof

\begin{lemma}
    \label{lemma:uniq-extended-inducted}
    Let $T$ be a cubic Halin tripole such that $P(T)=P(B)$ where $B$ is a bad tripole distinct from $T^{H_5}_1$, $T^{H_8}_1$, $T^{H_5}_2$, $T^{H_8}_2$, $T^{H_8}_3$, and $T^{H_8}_4$, or a tripole isomorphic to $T^0$, $T^1$, $T^4_1$, $T^4_2$, $T^6_1$, $T^6_2$, $T^7_1$, or $T^7_2$. Then $T\cong B$.
\end{lemma}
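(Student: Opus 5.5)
The plan is to prove a more general claim and then specialise: \emph{if $P$ is a uniquely realizable palette and $B$ is a Halin tripole with $P(B)=P$, then every cubic Halin tripole $T$ with $P(T)=P$ satisfies $T\cong B$.} By Lemma~\ref{lemma:uniq-extended}, every tripole $B$ listed in the statement has a uniquely realizable palette, so the claim immediately gives Lemma~\ref{lemma:uniq-extended-inducted}. The claim itself I prove by strong induction on the rank of the palette $P$. This is well-founded because the definition of unique realizability recurses to palettes of strictly smaller rank.

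\emph{Base case.} If $P$ is trivial, then $P=P_0$. Lemma~\ref{lemma:trivial} gives $T\cong T^0$, and likewise $B\cong T^0$, so $T\cong B$.

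\emph{Inductive step.} Let $P$ be non-trivial, uniquely decomposable as $P=P_1\oplus P_2$ with $P_1,P_2$ uniquely realizable. Let $T$ be any cubic Halin tripole with $P(T)=P$.

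First, $T$ is non-trivial. Otherwise $P(T)=P_0$, but by Lemma~\ref{lemma:trivial} no composition of realizable palettes equals $P_0$, while $P$ is one.

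So the decomposition lemma gives $T=T_1+T_2$, and Lemma~\ref{lemma:decompose} gives $P(T_1)\oplus P(T_2)=P$. Both $P(T_1)$ and $P(T_2)$ are realizable palettes. By uniqueness of the decomposition, $P(T_1)=P_1$ and $P(T_2)=P_2$.

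Apply the same reasoning to $B$ (also non-trivial, so $B=B_1+B_2$ with $P(B_1)=P_1$ and $P(B_2)=P_2$). By the induction hypothesis applied to $P_1$ and $P_2$, we get $T_1\cong B_1$ and $T_2\cong B_2$ as Halin tripoles, that is, via orientation-preserving isomorphisms that respect the extended boundaries.

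It remains to glue these isomorphisms. Since $T=T_1+T_2$ and $B=B_1+B_2$ are assembled by the same rule — join the $y$-semi-edge of the first part with the $x$-semi-edge of the second, and attach a new root vertex to the two roots — the union of the two isomorphisms, extended by mapping root vertex to root vertex, is an isomorphism $T\to B$. It preserves orientation and sends $\partial T$ to $\partial B$, because the boundary of a composition is $(r,r^*,x_1,x_1^*,y_2,y_2^*)$ and the isomorphisms preserve the ordered boundaries of the parts.

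The main obstacle is the exact form of the induction hypothesis. The induction must be on the palette, not on the tripole, since the rank of $T$ could a priori exceed the rank of $P$. I also have to check that the induction applies to sub-palettes even when they do not appear in the specific list of the statement. The general claim above handles both points: the unique realizability of $P_1$ and $P_2$ is built into the definition, and the induction never refers to the list. The remaining point is that the gluing respects the plane embedding, and this is where order-respecting, orientation-preserving tripole isomorphism (rather than plain graph isomorphism) matters.
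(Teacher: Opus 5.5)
Your proposal is correct and matches the paper's reasoning. The paper's entire proof is the remark that the lemma follows directly from Lemma~\ref{lemma:uniq-extended}, and your strong induction is exactly the unpacking of why a uniquely realizable palette has only one realization up to Halin tripole isomorphism: the base case via Lemma~\ref{lemma:trivial}, unique decomposability forcing $P(T_1)=P_1$ and $P(T_2)=P_2$, and then gluing the boundary-respecting isomorphisms at the root. (As a harmless aside, inducting on the rank of the tripole would work just as well, since the hypothesis is only ever applied to $T_1$ and $T_2$, so your claim that the induction \emph{must} be on the palette is overly cautious.)
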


\beginproof The statement follows directly from Lemma \ref{lemma:uniq-extended}. \eproof

{{\it Proof (of Theorem \ref{thm:total-cubic}).\ }} 
Let $H$ be a cubic Halin graph that is not totally 4-colorable. Let $v$ be a peripheral vertex of $H$. Then $T^H_v$ is a non-trivial cubic Halin tripole with an incompletable realizable palette $P$. If $P \notin \{P_A, P_B, P_C\}$, meaning it is uniquely realizable, then $P = P(B)$ for some bad tripole $B$ other than $T^{H_5}_1, T^{H_8}_1, T^{H_5}_2, T^{H_8}_2, T^{H_8}_3$, or $T^{H_8}_4$, and by Lemma \ref{lemma:uniq-extended-inducted}, $T^H_v \cong B$. If $P \in \{P_A, P_B, P_C\}$, the palette only has two realizations, both giving uniquely realizable tripoles that compose into a bad tripole. In both cases, $T^H_v$ is a bad tripole, implying that $H$ is isomorphic to $K_4$, $H_4$, $H_5$, or $H_8$. \eproof

The source codes and detailed results of the computer-assisted part can be found at github \cite{Matok-Palette-Enumerator2026}.

%% file: article/3-subcubic.tex
We can use the same methods to find similar results for subcubic Halin graphs, with respect to total coloring and also with respect to AVD coloring. Let us recall the definition of the latter.

An \emph{adjacent vertex distinguishing coloring} of a graph $G$ is a proper edge-coloring $\varphi$ such that for any two adjacent vertices $u,v\in V(G)$ we have $\varphi[u] \neq \varphi[v]$, where $\varphi[x]$ denotes the set of colors on the edges incident with the vertex $x$. 
As mentioned in the introduction, the total and AVD colorings coincide on cubic graphs when using four colors. However, if vertices of degree two are allowed, then not every AVD coloring extends to a total coloring, nor can every total 4-coloring be interpreted as an AVD coloring (simply by forgetting the colors of vertices).
Therefore, we will present the results for each coloring separately. For the AVD coloring, for a vertex $v$, instead of coloring it with a color missing at its incident edges, we consider the set of missing colors, and we impose these sets to be different for adjacent vertices.

In section \ref{sec:proof} we have built a theory describing palettes of Halin tripoles. All notions extend to subcubic Halin tripoles, with respect to both types of coloring. 
The main difference lies in the presence of vertices of degree two. More precisely,
if the root vertex of a tripole has degree two, then it only decomposes into a single smaller tripole; the palette of the former can be computed from the palette of the latter.

Let $P_0$ be the palette of the Halin tripole of rank 0 with respect to total and AVD coloring (it is the same for both and it is the same as for cubic Halin graphs). Let $\mathcal{P}^+_{total}$ and $\mathcal{P}^+_{AVD}$ be the inclusion-wise minimal sets of palettes containing $P_0$ closed under composition, with respect to the total and AVD colorings, respectively. Clearly, these are the sets of all realizable palettes with respect to each coloring. The stratification of each of these sets follows the same principle as before. Additionally, the same principle can be applied to determine their size.

\begin{lemma}
    \label{lemma:strat-sub-total}
    The size of $\mathcal{P}^+_{total}$ is 3196. Its stratification contains 16 non-empty strata.
\end{lemma}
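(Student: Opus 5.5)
This is a computer-assisted statement, so the plan is to adapt the fixed-point computation behind Lemma \ref{lemma:p-plus-size} to subcubic tripoles under total coloring. First we make the subcubic composition rules precise.

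\emph{Binary composition.} When the root vertex $r^*$ has degree three, $T=T_1+T_2$ exactly as before. Lemma \ref{lemma:decompose} carries over verbatim, because its proof only uses the colors at $r^*$, at the edge $y_1^*x_2^*$, and at the root edges. One point must be checked: peripheral vertices stay cubic, since subcubicity only allows spanning vertices of degree two.

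\emph{Unary composition.} When $r^*$ has degree two, $T$ is obtained from a single tripole $T_1$ by subdividing its root edge. Thus $\partial T=(r,r^*,x_1,x_1^*,y_1,y_1^*)$, and the new vertex $r^*$ is adjacent only to $r_1^*$. We define a unary operator $\ominus$ on palettes. A sextuple $s_1=(a_1,b_1,c_1,d_1,e_1,f_1)$ yields $s=(a,b,c,d,e,f)$ when:
\begin{itemize}
\item $c=c_1$, $d=d_1$, $e=e_1$ and $f=f_1$;
\item $b\notin\{a_1,b_1\}$;
\item $a\notin\{a_1,b\}$.
\end{itemize}
The proof that $P(T)=\ominus P(T_1)$ is the same two-direction argument as in Lemma \ref{lemma:decompose}, and it is easier.

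The rank is still the number of spanning vertices. So the recursion of Lemma \ref{lemma:recfor} gains one extra clause: $P\in\mathcal{S}_k$ also if $P=\ominus P_1$ for some $P_1\in\mathcal{P}_{k-1}$.

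\emph{Saturation.} Starting from $\mathcal{S}_0=\{P_0\}$, we iterate the extended recursion. At step $k$ we form every $P_1\oplus P_2$ with ranks summing to $k-1$, together with every $\ominus P_1$ of rank $k-1$. Palettes are stored as canonical bitsets over the $|S|=4^3\cdot 3^3$ sextuples, and the new ones are added. Termination is guaranteed because the set of palettes is finite. The process stabilizes once a stratum is empty. The argument is the same as in the cubic case: every realizable palette of rank $k$ arises from realizable palettes of strictly smaller rank, so an empty stratum $\mathcal{P}_k$ forces all higher strata to be empty.

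The output is the count $|\mathcal{P}^+_{total}|=3196$ and the strata sizes. The last nonempty stratum should have index $15$, giving 16 nonempty strata.

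\emph{Main obstacle.} The obstacle is not conceptual but the correctness of the encoding. The vertex constraints at a degree-two root must be handled correctly. The claim that the empty-stratum stopping criterion is sound must also be justified; it follows because strata are generated only from lower strata. I would cross-check the implementation in two ways:
\begin{itemize}
\item Restricted to binary compositions, it must reproduce the 1214 palettes of Lemma \ref{lemma:p-plus-size}.
\item For small subcubic tripoles, the palettes should be verified against brute-force total 4-colorings.
\end{itemize}
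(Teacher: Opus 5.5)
Your framework is the one the paper uses (its proof only says the program behind Lemma~\ref{lemma:p-plus-size} was adjusted for degree-two roots), and your unary rule is correct. At a degree-two root the only new constraints are $b\notin\{a_1,b_1\}$ and $a\notin\{a_1,b\}$, and the rest of the boundary is inherited. The gap is your stopping criterion. You claim that an empty stratum $\mathcal{P}_k$ forces all higher strata to be empty ``because strata are generated only from lower strata''. That does not follow. By Lemma~\ref{lemma:recfor} extended with your unary clause, $\mathcal{P}_{k+1}$ receives $\ominus\mathcal{P}_k$, $\mathcal{P}_0\oplus\mathcal{P}_k$, $\mathcal{P}_k\oplus\mathcal{P}_0$, and also $\mathcal{P}_i\oplus\mathcal{P}_{k-i}$ for every $1\le i\le k-1$. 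Only the first three involve $\mathcal{P}_k$. Structurally, a rank-minimal tripole whose two root subtrees both have rank about $k/2$ has no sub-tripole of rank $k$. So nothing you have said rules out a new palette of rank $17$ arising as $P_1\oplus P_2$ with $P_1,P_2\in\mathcal{P}_8$, even when $\mathcal{P}_{16}=\emptyset$. Finiteness only guarantees that $\mathcal{S}_k$ eventually stops growing; one step without growth does not show it has stopped. The statement asserts both the count $3196$ and that exactly $16$ strata are nonempty, so halting at the first empty stratum certifies neither.

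The repair is cheap. You can use either of two checks.
\begin{itemize}
\item Check directly that $\mathcal{S}_{15}$ is closed: every $P_1\oplus P_2$ and every $\ominus P_1$ with $P_1,P_2\in\mathcal{S}_{15}$ must already lie in $\mathcal{S}_{15}$. Since $\mathcal{S}_{15}$ consists of realizable palettes and $\mathcal{P}^+_{total}$ is the smallest closed set containing $P_0$, this gives $\mathcal{S}_{15}=\mathcal{P}^+_{total}$.
\item Run the stratified recursion through rank $31$ and verify $\mathcal{P}_{16}=\dots=\mathcal{P}_{31}=\emptyset$. Then argue by induction on $m\ge 32$. A palette of rank $m$ decomposes into palettes of ranks exactly $i$ and $m-1-i$, or of rank $m-1$ in the unary case. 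Otherwise, substituting a smaller realization of a part would lower the rank of the composite, since by Lemma~\ref{lemma:decompose} the composite palette depends only on the parts' palettes. Because $i+(m-1-i)\ge 31$, one part has rank in $[16,m-1]$, which is empty by induction.
\end{itemize}
Either check turns your computation into a complete certificate.
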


\begin{lemma}
    \label{lemma:strat-sub-avd}
    The size of $\mathcal{P}^+_{AVD}$ is 2196. Its stratification contains 16 non-empty strata.
\end{lemma}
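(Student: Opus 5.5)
The statement to prove is Lemma~\ref{lemma:strat-sub-avd} (and, in parallel, Lemma~\ref{lemma:strat-sub-total}). It is a computational claim, so I would prove it the same way as Lemma~\ref{lemma:p-plus-size}: compute the closure $\mathcal{P}^+_{AVD}$ by iterating a recursive formula in the spirit of Lemma~\ref{lemma:recfor} until nothing new appears, then report its size and the number of non-empty strata. The mathematical content is in justifying that the composition rules used by the program are correct for AVD palettes. Termination and completeness then follow from finiteness.

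\textbf{Step 1: fix the sextuple format.} For AVD coloring a vertex carries no color. So in a precoloring of the extended boundary I would record, for each of $r^*,x^*,y^*$, the set of colors missing at that vertex. At a cubic vertex this is a single color, and at a degree-two vertex it is a pair. The sextuple thus becomes (edge color, missing set) three times, over a finite alphabet. The trivial tripole then has the same palette $P_0$ as before.

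\textbf{Step 2: composition at a cubic root.} I would state and prove the analogue of Lemma~\ref{lemma:decompose}. Two palettes are composable when:
\begin{itemize}
\item the colors on the glued edge $x_2^*y_1^*$ agree;
\item the missing sets of $x_2^*$ and $y_1^*$ differ;
\item $a_1\neq a_2$.
\end{itemize}
The yielded sextuple has root missing set $\{0,1,2,3\}\setminus\{a,a_1,a_2\}$, with $a\notin\{a_1,a_2\}$. In addition, the missing set at $r^*$ must differ from those at $r_1^*$ and $r_2^*$. The proof is the same element-by-element check as in Lemma~\ref{lemma:decompose}: every AVD constraint involving the new vertex $r^*$ or the new edges is local to these boundary data.

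\textbf{Step 3: unary extension at a degree-two root.} When the root has degree two, a single tripole $T_1$ is extended by a new root vertex of degree two. Its two edges are the root semi-edge and the edge to $r_1^*$, and its missing set is a pair. Here I must be careful about which semi-edges become $x,y$ in the result. Depending on where the extra edge attaches, the new tripole's extended boundary keeps $x_1,y_1$ of $T_1$. The constraints are again local, so a lemma analogous to Lemma~\ref{lemma:decompose} gives a unary operator $\ominus P_1$. One more point must be handled: peripheral vertices have degree three, but spanning vertices may have degree two. The closure therefore starts from $P_0$ and is closed under both $\oplus$ and the unary operator.

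\textbf{Step 4: run the iteration.} Rank is the number of spanning vertices. $\mathcal{S}_k$ is obtained from $\mathcal{S}_{k-1}$ by adding all $P_1\oplus P_2$ with $P_1\in\mathcal{P}_i$ and $P_2\in\mathcal{P}_{k-1-i}$, together with all unary extensions of palettes in $\mathcal{P}_{k-1}$. Palettes are subsets of a finite set, so the process stabilizes. The program reports $|\mathcal{P}^+_{AVD}|=2196$ with strata $\mathcal{P}_0,\dots,\mathcal{P}_{15}$ non-empty.

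\textbf{The main obstacle.} The subtle point is the stopping criterion. An empty stratum $\mathcal{P}_k$ does not by itself imply that all later strata are empty, since a binary composition can combine strata of lower ranks. However, if $\mathcal{P}_k,\dots,\mathcal{P}_{2k}$ are all empty, then every pair of ranks summing to at least $k$ uses an empty stratum. I would therefore run the iteration until a run of empty strata of at least the length of the largest non-empty rank, which proves that stabilization has truly occurred.

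The other risk is getting the degree-two AVD constraints exactly right. I would cross-validate by brute-force AVD coloring of all small subcubic Halin tripoles and checking that their palettes match the dynamically computed ones.
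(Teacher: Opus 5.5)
Your proposal takes the same approach as the paper. The paper's proof is just the closure computation of Lemma~\ref{lemma:p-plus-size}, rerun with the program adjusted for degree-two roots (your unary operator) and with missing-colour sets as the AVD boundary data; your attention to the stopping rule is a reasonable refinement of the paper's ``until no new palettes are created''. One small correction there: if $M$ is the largest non-empty rank, you need the $M+1$ strata $\mathcal{P}_{M+1},\dots,\mathcal{P}_{2M+1}$ to be empty (equivalently, $\mathcal{S}_M$ closed under $\oplus$ and the unary extension), because any pair of ranks summing to at least $2M+1$ must contain a rank above $M$ — not ``at least $k$'', and a run of only $M$ empty strata, as in your last sentence, leaves rank $31$ (from $15+15$) unchecked.
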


{{\it Proof (of Lemma \ref{lemma:strat-sub-total} and Lemma \ref{lemma:strat-sub-avd}).\ }} We adjusted the computer program described in the proof of Lemma \ref{lemma:p-plus-size} to account for root vertices of degree 2 with respect to individual colorings. \eproof

The details on respective strata sizes can be seen in Table \ref{tab:stratification-subcubic}.

The palettes corresponding to graphs $K_4, H_4, H_5$, and $H_8$ have naturally arisen in the subcubic case too. Perhaps surprisingly, $\mathcal{P}^+_{AVD}$ saw no addition of incompletable palettes compared to $\mathcal{P}^+$. Moreover, all the palettes participating in the decomposition of these incompletable palettes are uniquely realizable, but $P_A,P_B,$ and $P_C$. Fortunately, their behavior in $\mathcal{P}^+_{AVD}$ mimics its behavior in $\mathcal{P}^+$ precisely, meaning that both pairs of their respective palette decomposition already are uniquely realizable. Acknowledging that the palette of the trivial tripole is uniquely realizable in $\mathcal{P}^+_{AVD}$ (and also $\mathcal{P}^+_{total}$), we can without further a do, state the following theorem.

\begin{theorem}\label{thm:avd-subcubic}
    Let $H$ be a subcubic Halin graph other than $K_4 ,H_4, H_5$, and $H_8$. Then 
    \[
    \chi'_{AVD}(H) = 4.
    \]
\end{theorem}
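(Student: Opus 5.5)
We follow the proof of Theorem~\ref{thm:total-cubic} step by step, now working in $\mathcal{P}^+_{AVD}$. Two preliminary facts are needed. First, every subcubic graph has $\chi'_{AVD}\le 5$ by Balister et al.~\cite{LEHEL2007-AVD-subcubic}. Second, Halin graphs have adjacent vertices of maximum degree $3$ (for instance any two adjacent peripheral vertices), so $\chi'_{AVD}(H)\ge 4$. Together these reduce the statement to showing that every subcubic Halin graph other than $K_4,H_4,H_5,H_8$ admits an AVD $4$-coloring.

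The first step is an AVD analogue of Lemma~\ref{lemma:halin-tripole-palette}. For a peripheral vertex $v$, which has degree three, $H$ is AVD $4$-colorable if and only if $P(T^H_v)$ contains a completable extendable precoloring. In the AVD setting the ``vertex colors'' in a sextuple are the sets of missing colors, so $b,d,f$ are replaced by missing sets. Completability then means two things. The three edge colors $a,c,e$ at $v$ must be distinct, so $v$ misses a single color $i$. The missing sets of the three neighbours must differ from $\{i\}$; for a degree-two root this says $i$ is not in its missing set. We check that this gives an if-and-only-if exactly as in the cubic proof. We also check the composition rule, including the degree-two root case where the tripole is built from a single smaller one, in the same way as Lemma~\ref{lemma:decompose}. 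After these checks the dynamic-programming framework of Section~\ref{sec:proof} carries over verbatim.

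The second step is the computational input, which goes beyond Lemma~\ref{lemma:strat-sub-avd}. We enumerate $\mathcal{P}^+_{AVD}$ and record the following.
\begin{enumerate}[label=(\alph*)]
\item The incompletable palettes of $\mathcal{P}^+_{AVD}$ are exactly the $22$ palettes arising from the bad tripoles of $K_4,H_4,H_5,H_8$ and the trivial tripole.
\item The analogue of Lemma~\ref{lemma:trivial} holds in $\mathcal{P}^+_{AVD}$: no composition, including a unary degree-two composition, yields $P_0$.
\item Each of these incompletable palettes other than $P_A,P_B,P_C$ is uniquely realizable in $\mathcal{P}^+_{AVD}$. In particular, no realization uses a degree-two root, since a degree-two root would give a decomposition different from the cubic one.
\item $P_A,P_B,P_C$ have exactly the two decompositions listed for the cubic case, and all palettes appearing in them, namely those of $T^0,T^1,T^4_i,T^6_i,T^7_i$, are uniquely realizable in $\mathcal{P}^+_{AVD}$.
\end{enumerate}
From (b)–(d) we obtain, by induction on rank as in Lemma~\ref{lemma:uniq-extended-inducted}, that any subcubic Halin tripole whose palette equals that of a bad tripole is itself isomorphic to a bad tripole.

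The final step mirrors the proof of Theorem~\ref{thm:total-cubic}. Suppose $H$ is not AVD $4$-colorable and pick a peripheral $v$. By the first step, $P(T^H_v)$ is incompletable, so by (a) it is one of the $22$ palettes above. By the inductive conclusion, $T^H_v$ is a bad tripole, and hence $H\in\{K_4,H_4,H_5,H_8\}$, which are cubic. The main obstacle is step one, together with correctly encoding degree-two roots in the composition. Here the AVD condition couples missing sets rather than single colors, so the sextuple must carry sets, and the completability and composability conditions must be rederived carefully. Once that encoding is right, the remaining work is a finite verification.
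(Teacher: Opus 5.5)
Your overall plan is the paper's: work in $\mathcal{P}^+_{AVD}$, observe that its incompletable palettes are the same 22 as in the cubic case, use the analogue of Lemma~\ref{lemma:trivial} and unique realizability (handling $P_A,P_B,P_C$ through their two decompositions), and conclude as in Theorem~\ref{thm:total-cubic}. However, your AVD completability condition is wrong at a degree-two root. If $r^*$ has degree two, its missing set has two elements, so it automatically differs from $v$'s missing set $\{i\}$, and AVD imposes no condition on that edge. Your requirement that $i$ not lie in the missing set of $r^*$ is equivalent to the second edge at $r^*$ having color $i$. That is the strict-neighbour-distinguishing condition, not the AVD one. It comes from translating condition (ii) literally as $i\notin b\cup d\cup f$.

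This is not harmless. The direction you use at the end (completable $\Rightarrow$ colorable) survives, but the ``only if'' of your step one fails, and with it claim (a): under your encoding the set of incompletable palettes strictly contains the 22.

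Concretely, take $K_4$ with one spoke subdivided. Its tree edges are $vu,uw,wp,wq$, its rim is the triangle $vpq$, and $\deg u=2$. It is AVD $4$-colorable via $vu=0$, $vp=1$, $vq=2$, $uw=1$, $wp=2$, $wq=0$, $pq=3$. Yet no AVD $4$-coloring gives $uw$ the color $i$ missing at $v$. To see this, let $a,c,e$ be the colors of $vu,vp,vq$ and suppose $uw=i$; then $pq\in\{a,i\}$.
\begin{itemize}
\item If $pq=a$, this forces $wp=e$ and $wq=c$, so $p$ misses $i$, just like $v$.
\item If $pq=i$, the three admissible choices of $(wp,wq)$ make the missing colors of $w,p$, or of $w,q$, or of $p,q$ coincide.
\end{itemize}
So the rank-$2$ palette $P(T^H_v)$ would be reported incompletable, is not among the 22, and your final step cannot dispose of it.

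The fix is to impose nothing at a degree-two root. Completability should mean: $a,c,e$ are distinct, and $i$ differs from the singleton missing colors of the degree-three neighbours. The same care is needed in the composition rule, where a degree-three vertex adjacent to a degree-two vertex likewise imposes no constraint. With that correction, your argument coincides with the paper's.
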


{{\it Proof.\ }} Analogous to the proof of Theorem \ref{thm:total-cubic}. \eproof

The $4$-th column of Table \ref{tab:stratification-subcubic} shows that there are 9 more incompletable palettes in $\mathcal{P}^+_{total}$ than there were in $\mathcal{P}^+$. Starting at rank 2, there is a uniquely realizable palette corresponding to a subcubic Halin tripole with both spanning vertices of degree 2. This tripole corresponds to a fictional subcubic Halin graph with only two peripheral vertices, implying a multiple (peripheral) edge. At rank 3, there are 8 incompletable palettes. The graph $H_3$, depicted in Figure \ref{fig:subcubic-noncolor} gives rise to 8 subcubic Halin tripoles whose palettes of the form $T^{H_3}_v$ correspond to the 8 incompletable palettes on rank 3. We will also call these tripoles bad, extending the set of bad tripoles arising from $K_4, H_4, H_5,$ and $H_8$ described in section \ref{subsec:palettes}. As all the bad tripoles arising from $H_3$ are uniquely realizable, the claims of Lemma \ref{lemma:uniq-extended}, and inherently Lemma \ref{lemma:uniq-extended-inducted} can be extended for them as well. That again allows us to present the following theorem.

\begin{figure}[ht]
    \centering
    \includegraphics[width=0.25\linewidth]{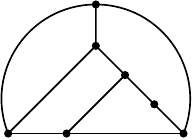}
    \caption{$H_3$ -- subcubic Halin graph with $\chi'' = 5.$}
    \label{fig:subcubic-noncolor}
\end{figure}

\begin{theorem}\label{thm:total-subcubic}
    Let $H$ be a subcubic Halin graph other than $K_4, H_4, H_5$, $H_8$, and $H_3$. Then 
    \[
    \chi''(H) = 4.
    \]
\end{theorem}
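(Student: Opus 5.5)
The proof will follow the scheme of the proof of Theorem~\ref{thm:total-cubic}, now carried out in the subcubic setting for total coloring. Let $H$ be a subcubic Halin graph with $\chi''(H)=5$, and fix a peripheral vertex $v$ of $H$. Peripheral vertices still have degree three, so $T^H_v$ is again a tripole. Lemma~\ref{lemma:halin-tripole-palette} carries over verbatim: its proof only uses the three edges and three neighbours of $v$. Hence $H$ is totally $4$-colorable if and only if $P(T^H_v)$ contains a completable sextuple. If $H$ is of Type~2, then $P(T^H_v)$ is an incompletable palette of $\mathcal{P}^+_{total}$.

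The first step is to extend the composition calculus to tripoles whose root vertex has degree two. Such a tripole has a single child tripole $T_1$. Its palette is obtained from $P(T_1)$ by a unary operation: keep $c,d,e,f$ from the child boundary, require the child root edge colour $a_1$, the child root-vertex colour $b_1$, the new root-vertex colour $b$ and the new root-edge colour $a$ to satisfy the incidence and adjacency constraints at the new root, and let $b$ avoid $a_1,b_1,a$. The analogue of Lemma~\ref{lemma:decompose} then follows by the same gluing argument. Lemma~\ref{lemma:recfor} also holds, with the extra clause $P=U(P_1)$ for $P_1\in\mathcal{P}_{k-1}$. This is exactly the computation behind Lemma~\ref{lemma:strat-sub-total}.

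The second step is the computer enumeration. It has three parts.
\begin{enumerate}[label=(\alph*)]
\item List all incompletable palettes of $\mathcal{P}^+_{total}$. These are the 22 cubic ones, the rank-2 palette coming from the tripole with two spanning vertices of degree two, and the 8 rank-3 palettes coming from $H_3$.
\item Verify unique realizability in the extended sense. A palette must now also be checked to have no unexpected unary decomposition, so "uniquely decomposable" is taken over both binary and unary decompositions.
\item Confirm that $P_A,P_B,P_C$ still have only their two known decompositions, each into uniquely realizable palettes.
\end{enumerate}
With these checks, Lemmas~\ref{lemma:trivial}, \ref{lemma:uniq-extended} and \ref{lemma:uniq-extended-inducted} extend to the bad tripoles of $H_3$. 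So $T^H_v$ is isomorphic to a bad tripole, or, for $P_A,P_B,P_C$, to one of the two composites, which are again bad tripoles.

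The third step disposes of the rank-2 incompletable palette. Its unique realization has two spanning vertices of degree two and only two peripheral vertices. Completing it by $v$ yields a multigraph with a doubled peripheral edge, which is not a (simple) Halin graph. So this case cannot occur for $T^H_v$. Since $T^H_v$ determines $H$ by re-attaching $v$, we conclude $H\in\{K_4,H_4,H_5,H_8,H_3\}$.

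The main obstacle is step two, specifically verifying uniqueness of realizations now that unary steps exist. A degree-two vertex could in principle "absorb" differences, letting distinct tripoles share an incompletable palette. I would first certify this by the same program as before, extended with the unary operation, before relying on the structural conclusion.
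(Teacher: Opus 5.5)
Your proposal is correct and follows the paper's own argument. The paper likewise computes palettes of degree-two roots from a single child palette and enumerates $\mathcal{P}^+_{total}$ by computer. It finds the nine new incompletable palettes: the rank-2 one, whose completion would be the ``fictional'' Halin graph with a doubled peripheral edge, and the eight rank-3 ones coming from $H_3$. It then concludes as in the proof of Theorem~\ref{thm:total-cubic} via unique realizability, with $P_A,P_B,P_C$ handled through their two known decompositions. Your explicit check (c), that $P_A,P_B,P_C$ gain no new binary or unary decompositions in the subcubic setting, is a point the paper states explicitly only for the AVD case. It is good that you make it part of the verification.
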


{{\it Proof.\ }} Analogous to the proof of Theorem \ref{thm:total-cubic}. \eproof

\begin{table}[ht]
    \centering
    \begin{tabular}{|c|c|c|c|c|c|c|}
        \cline{2-7}
        \multicolumn{1}{c}{ } &
        \multicolumn{3}{|c|}{$X = total$} &
        \multicolumn{3}{c|}{$X = AVD$} \\
        \hline
         Stratum & $|\mathcal{P}^+_{X}|$ & UD ($\mathcal{P}^+_{X}$) & I ($\mathcal{P}^+_{X}$) & $|\mathcal{P}^+_{X}|$ & UD ($\mathcal{P}^+_{X}$) & I ($\mathcal{P}^+_{X}$) \\ \hline
         $\mathcal{P}_0$ & 1 & 1& 1& 1 & 1 & 1\\
         $\mathcal{P}_1$ & 2 & 2& 1& 2 & 2 & 1\\
         $\mathcal{P}_2$ & 6 & 6& 1& 6 & 4 & 0\\
         $\mathcal{P}_3$ & 22 & 19& 8 & 20 & 15 & 0\\
         $\mathcal{P}_4$ & 87 & 71& 6 & 75 & 40 & 6\\
         $\mathcal{P}_5$ & 264 & 189& 7 & 177 & 94 & 7\\
         $\mathcal{P}_6$ & 523 & 331& 0 & 284 & 159 & 0\\
         $\mathcal{P}_7$ & 716 & 442& 0 & 423 & 214 & 0\\
         $\mathcal{P}_8$ & 691 & 385& 7 & 458 & 238 & 7\\
         $\mathcal{P}_9$ & 446 & 278& 0 & 309 & 159 & 0\\
         $\mathcal{P}_{10}$ & 274 & 164& 0 & 222 & 131 & 0\\
         $\mathcal{P}_{11}$ & 105 & 70& 0 & 140 & 72 & 0\\
         $\mathcal{P}_{12}$ & 43 & 28& 0 & 57 & 39 & 0\\
         $\mathcal{P}_{13}$ & 8 & 6& 0 & 12 & 4 & 0\\
         $\mathcal{P}_{14}$ & 6 & 6& 0 & 4 & 2 & 0\\
         $\mathcal{P}_{15}$ & 2 & 2& 0 & 6 & 6 & 0\\ \hline
         $\mathcal{P}_{i}, i \geq 16$ & 0 & 0 & 0 & 0 & 0 & 0 \\ \hline
         total & 3196 & 2000 & 31 & 2196 & 1180 & 22\\ \hline
    \end{tabular}
    \caption{The sizes of strata of  $\mathcal{P}^+_{total}$ and $\mathcal{P}^+_{AVD}$. For each type of coloring, the number of all, uniquely decomposable (UD), and incompletable (I) palettes are given.}
    \label{tab:stratification-subcubic}
\end{table}

%% file: article/4-conclusion.tex
In this article, we have fully characterized cubic and subcubic Halin graphs which are not totally or AVD 4-colorable. As an auxiliary result we demonstrated a linear-time algorithm yielding instances of total (or AVD) coloring in a (sub)cubic Halin graph.

As a final remark, we would like to interest the reader in yet another, very close in nature, coloring. The strict neighbor distinguishing (SND) coloring, introduced by Przybyło and Kwaśny \cite{Przybylo20215} and independently by Gu et al. \cite{Gu2021355}, puts even stronger constraints on the chromatic neighborhoods of adjacent vertices. The formal definition is as follows. A \emph{strict neighbor distinguishing coloring} of a graph $G$ is a proper edge-coloring $\varphi$ with $k$ colors, such that for any two adjacent vertices $u,v\in V(G)$ : $\varphi[u] \not\subseteq \varphi[v]$ and $\varphi[v] \not\subseteq \varphi[u]$. The smallest $k$ for which $G$ admits a SND-coloring is denoted by $\chi'_{snd} (G)$. In cubic graphs, the problem is equivalent to AVD-coloring as $\varphi[u] \subseteq \varphi[v]$ only if $\varphi[u] = \varphi[v]$, and therefore also to total coloring. For subcubic graphs, however, this argument does not hold. It is known that $\chi'_{snd}(G) \leq 7$ for any subcubic graph $G$ \cite{Gu2021355}, moreover, the class of subcubic graphs attaining this bound consists of a single graph (which is not Halin).

In Figure \ref{fig:chair-config} we demonstrate that the class of subcubic Halin graphs with $\chi'_{snd}(G) > 4$ is infinite, in contrast to total and AVD- colorings. We identified a tripole whose SND-palette is empty when using 4 colors, hence, any Halin graph containing it is inherently not SND-4-colorable. For details, see Figure \ref{fig:chair-config}. 

\begin{figure}
    \centering
    \includegraphics{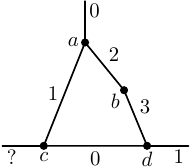}
    \caption{A tripole with an empty SND-palette. Assume it has an SND coloring $\varphi$ using only four colors. 
    Without loss of generality, let the colors at $a$ be 0, 1, 2 as in the drawing. Then $\varphi(bd)=3$, otherwise $\varphi[b]\subset \varphi[a]$.
    To avoid $\varphi[b]\subset \varphi[d]$, the color 2 cannot be used at $d$, and so $\varphi(cd)=0$. But then either $\varphi[c]=\varphi[a]$ or $\varphi[c]=\varphi[d]$, a contradiction.}
    \label{fig:chair-config}
\end{figure}